\newcommand\bbone{{\mathbbm 1}}
\newcommand{\R}{\mathbb{R}}
\newcommand{\cE}{\mathcal{E}}
\newcommand{\al}{\alpha}
\newcommand{\la}{\lambda}
\newcommand{\oo}{\infty}
\newcommand{\sbs}{\subseteq}
\newcommand{\lss}{\lesssim}
\newcommand{\grs}{\gtrsim}
\newcommand{\wh}{\widehat}
\newcommand{\wt}{\widetilde}
\DeclareMathOperator{\sgn}{sgn}
\newcommand{\Be}{\begin{equation}}
\newcommand{\Ee}{\end{equation}}
\newcommand{\Bea}{\begin{align}}
\newcommand{\Eea}{\end{align}}
\newcommand{\Beas}{\begin{align*}}
\newcommand{\Eeas}{\end{endalign*}}
\newcommand{\Benu}{\begin{enumerate}}
\newcommand{\Eenu}{\end{enumerate}}
\newcommand{\Bi}{\begin{itemize}}
\newcommand{\Ei}{\end{itemize}}
\def\intslash{\rlap{\kern  .32em $\mspace {.5mu}\backslash$ }\int}
\def\qsl{{\rlap{\kern  .32em $\mspace {.5mu}\backslash$ }\int_{Q_x}}}
\def\R{\mathbb R}
\def\floork3{{\lfloor k/3 \rfloor }}
\def\emph#1{{\it #1 }}
\def\bbone{{\mathbbm 1}}
\def\cf{{\it cf}}
\def\sgn{{\text{\rm sign }}}
\def\meas{{\text{\rm meas}}}
\def\lc{\lesssim}
\def\eps{\varepsilon}
\def\la{\lambda}
\def\fN{{\mathfrak {N}}}
\def\bbN{{\mathbb {N}}}
\def\bbR{{\mathbb {R}}}
\def\cE{{\mathcal {E}}}
\newtheorem{theorem}{Theorem}[section]
\newtheorem{prop}[theorem]{Proposition}
\newtheorem{corol}[theorem]{Corollary}
\newtheorem{lemma}[theorem]{Lemma}
\theoremstyle{definition}
\theoremstyle{remark}
\newtheorem*{remarks}{Remarks}
\numberwithin{equation}{section}
\begin{document}

\title[Pointwise convergence of Schr\"odinger means]{On  pointwise convergence of   Schr\"odinger means}


\author{Evangelos Dimou}
\address{E. Dimou, Department of Mathematics, University of Michigan, 530 Church Street
Ann Arbor, MI 48109-1043, USA} 
\curraddr{Department of Mathematics, University of Virginia, P. O. Box 400137, Charlottesville, VA 22904}
\email{ed8bg@virginia.edu}
\thanks{Research supported in part by the National Science Foundation}

\author{Andreas  Seeger}
\address{A. Seeger, Department of Mathematics, University of Wisconsin-Madison, 480 Lincoln Drive, Madison, WI 53706, USA}
\email{seeger@math.wisc.edu}

\subjclass[2010]{42B25, 35Q41}

\keywords{Pointwise convergence; maximal functions, Schr\"odinger equation; dispersive equation}




\begin{abstract} For functions in the Sobolev space $H^s$ and decreasing sequences $t_n\to 0$ we examine  convergence almost everywhere of the generalized Schr\"odinger means on the real line, given by $$S^af(x,t_n)=\exp( it_n (-\partial_{xx})^{a/2})f(x);$$ here $a>0$, $a\neq 1$. For decreasing convex sequences we obtain a simple characterization of convergence a.e. for all functions in $H^s$ when $0<s<\min\{a/4,1/4\}$ and $a\neq 1$.  We  prove sharp quantitative local and global estimates for  the associated maximal functions.
We also obtain sharp results for the case  $a=1$.
\end{abstract}

\maketitle

\section{Introduction}

\medskip
For Schwartz functions  $f$ defined on the real line 
 consider the initial value problem
$$i\partial_tu(x,t)+(-\partial_{xx})^{a/2}u(x,t)=0, \quad u(x,0)=f(x);$$
so that for $a=2$ we recover the Schr\"odinger equation. The solutions are given by
$$S^af(x,t)=\int_{\R}e^{i(x\xi+t|\xi|^a)}\wh f(\xi)\,\tfrac{d\xi}{2\pi},$$ 
and, for fixed time,  the solution operator extends to all $f\in H^s$, where   $H^s$ is the Sobolev space of all distributions  $f$ with $\|f\|_{H_s}:=(\int (1+|\xi|^2)^s|\widehat f(\xi)|^2 d\xi)^{1/2}<\infty$.

One refers to the operators $f\mapsto S^a f(\cdot,t)$ as generalized Schr\"odinger means. For Schwartz functions $f$ it is clear that $\lim_{t\to 0} S^a f(x,t)=f(x)$ and that the convergence is uniform in $x$. 
One is interested in almost everywhere convergence for functions in $H^s$ for suitable $s>0$. Following
the fundamental result by Carleson \cite{Carl}, many authors  have considered this question. It was shown in \cite{Carl}, \cite{Sj} that 
\begin{equation*}
\lim_{t\to 0} S^af(x,t)= f(x) \; \textrm{ a.e.},  \quad f\in H^{1/4},  
\end{equation*} when $a>1$
and this result fails for some $f\in H^s$, if $s<1/4$ (\cite{DK}, \cite{Sj}). 
If $0<a<1$, pointwise convergence for $f\in H^s$ holds when $s>a/4$
 and may fail for $f\in H^s$ when $s<a/4$, see \cite{Wa}. 
We remark that the problem  in higher dimensions is much harder 
and not considered here. For the Schr\"odinger equation in higher dimensions a complete  solution  up to endpoints has been recently found  in \cite{DGL}, \cite{DZ}  
and relies on sophisticated methods from  Fourier restriction theory. We refer to these papers for more references and a historical prospective.

\medskip

In this paper, we consider, in one spatial  dimension,  the question of the solution converging to the initial data when the limit is taken over a decreasing  sequence $\{t_n\}_{n=1}^\infty$, converging to zero. Here we always use the term `decreasing' as synonymous with `nonincreasing'. Given such a sequence we seek to find the precise range of $s$ such that
$\lim_{n\to \infty}\,S^af(x,t_n)=f(x) \ \textrm{a.e.}$
holds for every $f\in H^s$. This is partially 
motivated by the work 
\cite{CLV} on approach regions for pointwise convergence for solutions of the Schr\"odinger equation, and also by the work 
\cite{SWW} on the pointwise convergence of spherical means of $L^p$ functions
(although the mathematical issues and expected outcomes for the latter problem are  different).

 For the class of convex decreasing sequences  and any $s\in (0, \min\{a/4,1/4\})$ we obtain a complete characterization of when pointwise convergence holds for all $f\in H^s$. This characterization involves the  Lorentz space $\ell^{r,\infty}(\mathbb N)$. By definition,  for $0<r<\infty$,  \[
\{t_n\}\in \ell^{r,\infty} \quad \iff \quad   \sup_{b>0} b^{r} \#\{n\in \bbN: |t_n|>b\} <\infty.\] Note that $\ell^{r_1,\infty}(\bbN) \subset \ell^{r_2}(\bbN) \subset \ell^{r_2,\infty}(\bbN)\subset \ell^\infty(\bbN)$ if $r_1< r_2<\infty$ and all inclusions are strict.  A model example is given by $t_n=n^{-\gamma}$ which belongs to $\ell^{r,\infty}$ if and only if $r\ge 1/\gamma$. 
Another  example is  $\{n^{-\gamma}\log n\}$ which belongs to $\ell^{r,\infty}$ if and only if $r> 1/\gamma$. 




\begin{theorem}\label{aeconv} Let $a>0$, $a\neq 1$, and assume $0<s<\min\{a/4, 1/4\}$.
Let $\{t_n\}_{n=1}^\infty$ be a decreasing  sequence with $\lim_{n\to\infty}t_n=0$ and assume that
$t_n-t_{n+1}$ is also decreasing.
 Then the following four statements are equivalent.
 
 \smallskip
 (a) The sequence $\{t_n\}$ belongs to $\ell^{r(s),\infty}(\bbN)$, where $r(s)=\frac{2s}{a-4s}$.
 
  \smallskip
 (b) There is a constant $C_1$ such that for all $f \in H^s$ and for all sets $B$ of diameter at most $1$ we have
 $$\| \sup_{n\in \bbN} | S^af(x,t_n)| \|_{L^2(B)} 
 \le C_1 \|f\|_{H^s}.$$
 \smallskip
 
 (c) There  is  a constant $C_2$ such that for all $f \in H^s$, for all sets $B$ of diameter at most $1$, and for all $\alpha>0$,
 $$ \meas(\{ x\in B: \sup_{n\in \bbN} | S^af(x,t_n)|>\alpha\}) \le C_2\,\alpha^{-2} \|f\|_{H^s}^2.
 $$
  \smallskip
  
 (d) For every $f\in H^s$ we have $$\lim_{n\to \infty}\,S^af(x,t_n)=f(x) \quad \textrm{ a.e.}$$
\end{theorem}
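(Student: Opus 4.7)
The proof proceeds via the cyclic chain (a)$\Rightarrow$(b)$\Rightarrow$(c)$\Rightarrow$(d)$\Rightarrow$(a). The implication (b)$\Rightarrow$(c) is Chebyshev. For (c)$\Rightarrow$(d) I would use the standard Banach-principle argument: Schwartz functions $g$ satisfy $S^ag(\cdot,t_n)\to g$ uniformly (by dominated convergence on the Fourier side, since $\widehat{g}$ is integrable), so decomposing $f\in H^s$ as $g+h$ with $\|h\|_{H^s}<\varepsilon$ and applying the weak-type bound (c) to $h$ forces $\limsup_n|S^af(\cdot,t_n)-f|<\alpha$ except on a set of measure $\lesssim\varepsilon^2/\alpha^2$, which shrinks as $\varepsilon\to 0$.

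For (d)$\Rightarrow$(a) I would argue contrapositively. If $\{t_n\}\notin\ell^{r(s),\infty}$ there exists a subsequence $b_k\downarrow 0$ with $b_k^{r(s)}\#\{n:t_n>b_k\}\to\infty$. Exploiting the fact that at frequency scale $\lambda_k\sim b_k^{-1/a}$ the phases $t_n|\xi|^a$ for $t_n\in(0,b_k]$ attain many distinct residues modulo $2\pi$ as $\xi$ varies in an annulus $|\xi|\sim\lambda_k$, one constructs $H^s$-normalized bump functions $f_k$ Fourier-concentrated at frequency $\lambda_k$ (via a random-signs/Khintchine argument, or a Zygmund-type deterministic construction) whose restricted maximal functions $\sup_n|S^af_k(\cdot,t_n)|$ have $L^2(B)$-norms tending to infinity. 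A Stein-style gluing of these test functions then produces a single $f\in H^s$ for which a.e.\ convergence along $\{t_n\}$ fails, contradicting (d).

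The heart of the argument is (a)$\Rightarrow$(b). Using a Littlewood-Paley decomposition $f=\sum_j f_j$ with $\spp\widehat{f_j}\subset\{|\xi|\sim 2^j\}$ and $\|f\|_{H^s}^2\sim\sum_j 2^{2js}\|f_j\|_{L^2}^2$, the matter reduces (modulo a minor almost-orthogonality/Cauchy-Schwarz refinement in $j$ to assemble the pieces) to a per-frequency maximal estimate
\[
\bigl\|\sup_{n\ge 1}|S^af_j(\cdot,t_n)|\bigr\|_{L^2(B)}\lesssim 2^{js}\,\|f_j\|_{L^2}.
\]
Partition the time range $(0,t_1]$ into dyadic shells $I_k=(2^{-k-1},2^{-k}]$ with $N_k:=\#\{n:t_n\in I_k\}\lesssim 2^{kr(s)}$ (using (a)), and bound the partial maximal function $M_{j,k}(x):=\sup_{t_n\in I_k}|S^af_j(x,t_n)|$ by the smaller of two estimates: (i)~the trivial square-function bound $\|M_{j,k}\|_{L^2(B)}\le N_k^{1/2}\|f_j\|_{L^2}\lesssim 2^{kr(s)/2}\|f_j\|_{L^2}$ from Plancherel at each $t_n$; and (ii)~the continuous-time local maximal bound over $I_k$, obtained from the classical Sj\"olin / Kenig-Ponce-Vega estimate (with its analogue for $0<a<1$ due to Walther) combined with a parabolic rescaling $(t,x)\mapsto(2^{-k}t,2^{-k/a}x)$ of the Schr\"odinger operator to exploit the shortness of the shell. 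The cross-over between (i) and (ii) occurs at a value $k_\ast\asymp aj\cdot 2\sigma/(r(s)+2\sigma)$ with $\sigma=\min\{1/4,a/4\}$, and the geometric sums on either side evaluate to $2^{js}\|f_j\|_{L^2}$; the algebraic identity underlying this balance is precisely $r(s)=2s/(a-4s)$. The main obstacle is engineering the sharp short-interval version of the Kenig-Ponce-Vega maximal estimate that survives the rescaling without lossy covering factors on the fixed ball $B$, and controlling the low-dispersion shells $k\gtrsim aj$ by exploiting both the smallness $\|S^af_j(\cdot,t_n)-f_j\|_{L^2}\lesssim 2^{aj-k}\|f_j\|_{L^2}$ and the convexity hypothesis on $\{t_n\}$, which prevents pathological clustering in those shells.
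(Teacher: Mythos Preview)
Your cyclic scheme and the implications (b)$\Rightarrow$(c) and (c)$\Rightarrow$(d) are fine and match the paper. The substantive differences and gaps are in the two hard implications.

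\textbf{On (a)$\Rightarrow$(b).} Your two-estimate balance (counting versus short-interval continuous maximal bound) is exactly the right mechanism, and the algebra at the crossover does give $2^{js}$. But the obstacle you flag is real: obtaining the sharp short-interval bound by parabolically rescaling the local Sj\"olin/Kenig--Ruiz inequality forces you to cover a dilated ball $2^{k/a}B$ by $\sim 2^{k/a}$ unit balls, which costs exactly the factor that kills the balance. The paper sidesteps this entirely by proving a \emph{global} frequency-localized estimate directly: linearize the sup by a measurable $t(x)$, and a $TT^*$ argument (stationary phase plus van der Corput on the kernel) yields
\[
\big\|\sup_{t\in J}|S^a_\lambda f(\cdot,t)|\big\|_{L^2(\mathbb{R})}\lesssim (1+|J|^{1/4}\lambda^{a/4})\|f\|_2,
\]
with no spatial localization needed. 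This is the key technical input. Also: convexity plays no role in this direction---the paper explicitly notes that (a)$\Rightarrow$(b),(c),(d) hold for arbitrary decreasing sequences in $\ell^{r(s),\infty}$. Your invocation of convexity to handle the low-dispersion shells is unnecessary; the continuous-interval bound already gives $\|M_{j,k}\|_2\lesssim\|f_j\|_2$ once $k\gtrsim aj$.

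\textbf{On (d)$\Rightarrow$(a).} This is where your proposal has a genuine gap. The frequency scale $\lambda_k\sim b_k^{-1/a}$ you suggest is not the right one, and the random-signs/Khintchine idea is not what makes the construction work. The paper proceeds in two steps. First, (d)$\Rightarrow$(c) via Nikishin--Stein: a.e.\ existence of the limit forces finiteness of the maximal function a.e., hence continuity into $L^0$, hence a weighted weak-type inequality, and translation-averaging of the weight gives the uniform local weak-type bound. Second, (c)$\Rightarrow$(a) by an explicit Dahlberg--Kenig test: take $\widehat{f_{\lambda,\rho}}(\eta)=\rho^{-1}g((\eta+\lambda)/\rho)$ with carefully tuned $\lambda_j\sim M_j^{2/a}b_j^{-1/(a-4s)}$ and $\rho_j\sim b_j^{-1/2}\lambda_j^{1-a/2}$, expand the phase, and show that for each $x$ in an interval $I_j\subset[0,1]$ there is some $n(x,j)$ with $|S^af_{\lambda_j,\rho_j}(x,t_{n(x,j)})|\ge 1/2$. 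The step that needs convexity is precisely here: to place $x$ between consecutive $a\lambda_j^{a-1}t_{n+1}$ and $a\lambda_j^{a-1}t_n$ with a controlled gap, one uses that $t_n-t_{n+1}$ is decreasing to deduce $t_n-t_{n+1}\le 2b_j/\#\{m:b_j<t_m\le 2b_j\}$ whenever $t_n\le b_j$. Without this gap bound the linear phase term $(x-a\lambda^{a-1}t_n)\rho\xi$ cannot be made small, and the construction collapses. So convexity lives in the necessity, not the sufficiency---the reverse of what you proposed.
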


Here and in what follows we write $\meas(A)$ for the Lebesgue measure of  $A\subset\bbR$. The equivalence of (b) and (c) seems nontrivial, and we do not have a direct proof for it, without going through condition (a).
In Theorem \ref{aeconv} the convexity assumption can be dropped for the sufficiency, i.e. statements 
(b), (c), (d) hold 
whenever $t_n$ is decreasing and belongs to $\ell^{\frac{2s}{a-4s},\infty}(\bbN)$,
 see Proposition \ref{endpt} below. 


 
 


Regarding the maximal function inequalities we  also have  a global version:

\begin{theorem}\label{globmax} Let $a>0$, $a\neq 1$, and assume $0<s<a/4$.
Let $\{t_n\}_{n=1}^\infty$ be a decreasing  sequence 
 with $\lim_{n\to\infty}t_n=0$,  and assume that
$t_n-t_{n+1}$ is also decreasing.
 Then the following statements (a), (b), (c)  are equivalent.
 
 (a) The sequence $\{t_n\}$ belongs to $\ell^{\frac{2s}{a-4s},\infty}(\bbN)$.
 
 (b) There is a constant $C_1$ such that for all $f \in H^s$  we have
 $$\| \sup_{n\in \bbN} | S^af(x,t_n)| \|_{L^2(\bbR)} 
 \le C_1 \|f\|_{H^s}.$$

 (c) There  is  a constant $C_2$ such that for all $f \in H^s$ and all $\alpha>0$,
 $$ \meas(\{ x\in \bbR: \sup_{n\in \bbN} | S^af(x,t_n)|>\alpha\}) \le C_2\,\alpha^{-2}  \|f\|_{H^s}^2.
 $$
\end{theorem}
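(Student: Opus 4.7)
The plan is to establish the chain (b)$\Rightarrow$(c)$\Rightarrow$(a)$\Rightarrow$(b); the implication (b)$\Rightarrow$(c) is immediate from Chebyshev's inequality applied to $\sup_n|S^af(\cdot,t_n)|^2$, so the real work lies in the two remaining implications.

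For (a)$\Rightarrow$(b), I would use a Littlewood--Paley decomposition $f=\sum_{k\ge 0}f_k$ with $\widehat{f_k}$ supported in $\{|\xi|\sim 2^k\}$, and analyze the maximal function scale by scale. On such an annulus the phase $e^{it|\xi|^a}$ has time-oscillation scale $2^{-ka}$, so it is natural to split the index set at this threshold. For the small times $\{n:t_n\le 2^{-ka}\}$, the fundamental theorem of calculus and Minkowski's inequality yield
\begin{equation*}
\Bigl\|\sup_{n:\,t_n\le 2^{-ka}}|S^a f_k(\cdot,t_n)|\Bigr\|_2 \le \|f_k\|_2 + \int_0^{2^{-ka}}\|(-\partial_{xx})^{a/2}S^a f_k(\cdot,\tau)\|_2\,d\tau \lesssim \|f_k\|_2,
\end{equation*}
since $\|(-\partial_{xx})^{a/2}f_k\|_2\lesssim 2^{ka}\|f_k\|_2$. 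For the large times $\{n:t_n>2^{-ka}\}$, hypothesis (a) bounds $N_k:=\#\{n:t_n>2^{-ka}\}\lesssim 2^{kar(s)}$ with $r(s)=\frac{2s}{a-4s}$. The crucial step is to prove a frequency-localized maximal inequality of the form
\begin{equation*}
\Bigl\|\sup_{n:\,t_n>2^{-ka}}|S^a f_k(\cdot,t_n)|\Bigr\|_{L^2(\bbR)}\lesssim 2^{ks}\,\|f_k\|_2,
\end{equation*}
which, once established, sums over $k$ via a weighted Cauchy--Schwarz inequality against the identity $\|f\|_{H^s}^2\sim\sum_k 2^{2ks}\|f_k\|_2^2$ and a small~$\epsilon$ to be absorbed in the $\ell^{r,\infty}$ slack.

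For (c)$\Rightarrow$(a), I would argue by contraposition. If $\{t_n\}\notin\ell^{r(s),\infty}(\bbN)$, then along a subsequence of scales $k$ one has $N_k\gg 2^{kar(s)}$. I would construct a frequency-localized test function $f_k$ so that the dispersive evolution $S^a f_k(\cdot,t_n)$ emits essentially disjoint wave packets at the spatial positions $v t_n$, with $v=a\cdot 2^{k(a-1)}$. Comparing the resulting measure of the superlevel set of the maximal function against $\|f_k\|_{H^s}^2\sim 2^{2ks}\|f_k\|_2^2$ in (c) will force exactly $N_k\lesssim 2^{kar(s)}$ at each such scale, giving the $\ell^{r(s),\infty}$ conclusion.

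The main obstacle is the frequency-localized estimate in (a)$\Rightarrow$(b). The naive bound obtained by summing $\ell^2$ norms over the $N_k$ large-time samples yields only $N_k^{1/2}\|f_k\|_2\sim 2^{kar(s)/2}\|f_k\|_2$, which exceeds the target $2^{ks}\|f_k\|_2$ by the positive power $2^{k\cdot 4s^2/(a-4s)}$ and therefore is not summable. To recover the sharp exponent I expect to have to exploit the convexity hypothesis: the monotonicity of the gaps $t_n-t_{n+1}$ ensures that the wave packets $S^af_k(\cdot,t_n)$ have controlled spatial overlap multiplicity, which combined with a $TT^\ast$ computation at each frequency gives the required gain over the trivial square-root count. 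Carrying out this overlap estimate cleanly and checking its stability under the Littlewood--Paley summation is the technical heart of the argument.
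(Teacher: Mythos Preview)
Your chain (b)$\Rightarrow$(c)$\Rightarrow$(a)$\Rightarrow$(b) is fine, and (b)$\Rightarrow$(c) is indeed Chebyshev. But the plan for (a)$\Rightarrow$(b) has a real gap, and the role you assign to convexity is backwards.

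\medskip
\noindent\textbf{On (a)$\Rightarrow$(b).} The fundamental theorem of calculus on $[0,b]$ only gives
\[
\Big\|\sup_{0\le t\le b}|S^a f_k(\cdot,t)|\Big\|_2 \lesssim (1+b\,2^{ka})\|f_k\|_2,
\]
so your choice $b=2^{-ka}$ is forced, and then the trivial $N_k^{1/2}$ count above threshold is, as you noticed, too large by $2^{4ks^2/(a-4s)}$. The fix is \emph{not} convexity. The missing ingredient is a stronger interval estimate: a $TT^*$ argument with van der Corput on the \emph{continuous} maximal function yields
\[
\Big\|\sup_{t\in J}|S^a_\lambda f(\cdot,t)|\Big\|_{L^2(\bbR)}\lesssim \big(1+|J|^{1/4}\lambda^{a/4}\big)\|f\|_2,
\]
i.e.\ a fourth-root dependence on $|J|$ rather than linear. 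With this in hand one splits at the much larger threshold $b=\lambda^{-a/(1+2r)}$; below $b$ the estimate gives $b^{1/4}\lambda^{a/4}=\lambda^{ar/(2+4r)}=\lambda^s$, and above $b$ the \emph{crude} square-root count $(\#\{n:t_n>b\})^{1/2}\lesssim b^{-r/2}=\lambda^s$ already matches. No separation or overlap of the discrete samples is needed, and the implication holds for any decreasing sequence in $\ell^{r(s),\infty}$ --- convexity plays no role here. Summing over $k$ then requires some care (a further decomposition of the index set $\{t_n\}$ into dyadic blocks, matched to the frequency scales), but this is routine once the single-scale estimate is in place.

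\medskip
\noindent\textbf{On (c)$\Rightarrow$(a).} This is where convexity enters, and your picture is slightly inverted. The argument (of Dahlberg--Kenig type) builds $f_{\lambda,\rho}$ with $\widehat{f_{\lambda,\rho}}$ supported in an interval of width $\rho$ near $-\lambda$, and shows that $\sup_n|S^af_{\lambda,\rho}(x,t_n)|\ge 1/2$ on an \emph{entire} interval $I$ of length $\sim \lambda^{a-1}b$. For this one needs, for each $x\in I$, some $t_{n(x)}$ with $|x-a\lambda^{a-1}t_{n(x)}|\rho\lesssim 1$; i.e.\ the points $a\lambda^{a-1}t_n$ must be $\rho^{-1}$-dense in $I$. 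Convexity is exactly what guarantees this: since $t_n-t_{n+1}$ is decreasing, a lower bound on $\#\{n:b<t_n\le 2b\}$ forces \emph{all} gaps below $b$ to be small. Thus the packets should \emph{tile} the interval, not be disjoint; comparing $\meas(I)$ against $\|f_{\lambda,\rho}\|_{H^s}^2\sim\lambda^{2s}\rho^{-1}$ (with the correct scaling $\rho\sim\lambda^{1-a/2}b^{-1/2}$) then yields the $\ell^{r(s),\infty}$ bound.
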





\smallskip

 We contrast the above results with the exceptional case $a=1$ which covers solutions of the wave equation. Now the critical $r(s)=\frac{2s}{a-4s}$ in Theorem \ref{aeconv} has to be replaced with the smaller  $\frac{2s}{1-2s}$, for all $s<1/2$. Notice that $S^1$ corresponds to a family of translation operators, when acting on functions with spectrum in $[0,\infty)$ or $(-\infty,0]$. The analysis is somewhat  similar to   the one for spherical means in \cite{SWW}, see also \cite{stw}.
For $a=1$ we have 

\begin{theorem}\label{aeconva=1} Let $0<s<1/2$ and let $\{t_n\}_{n=1}^\infty$ be a decreasing  sequence with $\lim_{n\to\infty}t_n=0$ such that 
$t_n-t_{n+1}$ is also decreasing.
 Then the following four statements are equivalent.
 
 \smallskip
 (a) The sequence $\{t_n\}$ belongs to $\ell^{\rho(s),\infty}(\bbN)$, where $\rho(s)=\frac{2s}{1-2s}$.
 
  \smallskip
 (b) There is a constant $C_1$ such that for all $f \in H^s$ we have
 $$\| \sup_{n\in \bbN} | S^1 f(x,t_n)| \|_{L^2(\bbR)} 
 \le C_1 \|f\|_{H^s}.$$
 \smallskip
 
 (c) There  is  a constant $C_2$ such that for all $f \in H^s$, for all sets $B$ of diameter at most $1$, and for all $\alpha>0$,
 $$ \meas(\{ x\in B: \sup_{n\in \bbN} | S^1f(x,t_n)|>\alpha\}) \le C_2\,\alpha^{-2} \|f\|_{H^s}^2.
 $$
  \smallskip
  
 (d) For every $f\in H^s$ we have $$\lim_{n\to \infty}\,S^1 f(x,t_n)=f(x) \quad \textrm{ a.e.}$$
\end{theorem}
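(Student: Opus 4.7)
The implications $(b)\Rightarrow(c)$ and $(c)\Rightarrow(d)$ proceed along standard lines: the former by restricting the supremum to a ball $B$ and applying Chebyshev, the latter by the classical approximation argument. Schwartz functions are dense in $H^s$, the convergence $S^1\varphi(x,t_n)\to\varphi(x)$ is uniform for Schwartz $\varphi$, and the weak-type bound (c) applied to $f-\varphi$ controls the deviation and produces almost-everywhere convergence. So the real content is $(a)\Leftrightarrow(d)$.

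For the forward direction $(a)\Rightarrow(b)$, I would first exploit the observation highlighted just before the theorem: $S^1$ acts as translation by $\pm t$ on one-sided-spectrum functions. Splitting $f=f_++f_-$ via the spectral projections onto $[0,\infty)$ and $(-\infty,0]$ gives
\[
S^1 f(x,t_n)=f_+(x+t_n)+f_-(x-t_n),
\]
and $\Norm{f_\pm}_{H^s}\le\Norm{f}_{H^s}$, so matters reduce to establishing
\[
\Norm{\sup_n|g(\cdot+t_n)|}_{L^2(\R)}\lss \Norm{g}_{H^s}
\]
for $g$ with $\spp\widehat g\sbs[0,\infty)$ and the analogous statement with $t_n$ replaced by $-t_n$.

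To prove this reduced estimate, I would decompose $g=\sum_k g_k$ with $\widehat{g_k}$ localized to $|\xi|\sim 2^k$. On each piece, translates $g_k(\cdot+t_n)$ associated to $t_n$'s in a common $2^{-k}$-interval are indistinguishable up to a controllable error handled by the Hardy--Littlewood maximal function, so the essential parameter is the number $D_k$ of distinct $2^{-k}$-clusters hit by $\{t_n\}$. A dyadic block analysis using $\tau_m=t_{2^m}$, combined with the Lorentz decay $\tau_m\lss 2^{-m/\rho(s)}$ and the convex-gap structure (which forces the spacings inside the block $[\tau_{m+1},\tau_m]$ to be comparable to $(\tau_m-\tau_{m+1})/2^m$), yields
\[
D_k\lss 2^{k\rho(s)/(1+\rho(s))}=2^{2sk}.
\]
Inserting the na\"ive $\ell^2$ bound $\Norm{\sup_{t\in T_k}|g_k(\cdot+t)|}_2\lss D_k^{1/2}\Norm{g_k}_2 \lss \Norm{g}_{H^s}$ yields an $O(1)$ estimate per scale and hence a logarithmic divergence upon summing in $k$; this log-loss is the main technical obstacle. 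Following the strategy of \cite{SWW} (see also \cite{stw}) for spherical means, I would overcome it either by pairing the Littlewood--Paley scale $2^k$ with the sequence block index $m$ and exploiting $L^2$-near-orthogonality across distinct pairs, or by replacing the supremum with a variation norm $V^r$ whose quadratic behavior restores summability. Summing the refined per-scale bound over $k$ then delivers the global $H^s\to L^2$ inequality.

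For the sharpness $(d)\Rightarrow(a)$, I argue by contraposition. If $\{t_n\}\notin\ell^{\rho(s),\infty}$ then there are $b_m\searrow 0$ with $b_m^{\rho(s)}\#\{n:t_n>b_m\}\to\infty$. Set $2^{-k_m}\sim b_m$; the failure of the Lorentz bound produces $N_m\gg 2^{2sk_m}$ of the $t_n$'s lying in distinct $2^{-k_m}$-clusters. Take modulated Schwartz bumps $\phi_m(x)=2^{k_m/2}\psi(2^{k_m}x)e^{i2^{k_m}x}$, which are essentially $L^2$-normalized with $\Norm{\phi_m}_{H^s}\sim 2^{sk_m}$. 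The translates $\phi_m(\cdot+t_n)$ for $t_n$ in distinct $2^{-k_m}$-clusters have essentially disjoint supports, so a diagonal superposition $f=\sum_m\alpha_m 2^{-sk_m}\phi_m$ (with $\sum\alpha_m^2<\infty$) lies in $H^s$, yet one can arrange the $\alpha_m$ to decay slowly enough that $\sup_n|S^1 f(x,t_n)|=\infty$ on a set of positive measure, violating (d).
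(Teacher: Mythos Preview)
Your outline for $(b)\Rightarrow(c)\Rightarrow(d)$ is fine. The two substantive directions have genuine gaps.

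For $(a)\Rightarrow(b)$: you correctly identify the log-loss as the obstacle, but the proposed remedies (``near-orthogonality'' or variation norms) are too vague to constitute a proof. The paper's mechanism is concrete and elementary, and does not use either device. One partitions the sequence into blocks $\fN(l)=\{n: 2^{-(l+1)/(1+r)}<t_n\le 2^{-l/(1+r)}\}$, so that $\#\fN(l)\lss 2^{2ls}$, and for each $l$ splits the Littlewood--Paley sum at $k=l$. When $k=l+m\ge l$ one uses only Plancherel and the cardinality bound; when $k=l-j<l$ one uses the continuous interval estimate $\|\sup_{0\le t\le b}|S^1_\la f|\|_2\lss(\la b)^{1/2}\|f\|_2$ with $\la=2^{l-j}$, $b=2^{-l/(1+r)}$. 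Reorganizing the double sum by the offset $m$ (resp.\ $j$) produces geometric decay $2^{-ms}$ (resp.\ $2^{-j/(2+2r)}$) in that offset, while the inner $\ell^2$ sum in $l$ rebuilds $\|f\|_{H^s}$. The point you are missing is that your cluster count $D_k\lss 2^{2sk}$ should not be applied uniformly in $k$; for $k$ small relative to the time block one must switch to the interval bound, and it is precisely this crossover that generates the summable decay.

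For $(d)\Rightarrow(a)$: your direct construction does not work as written. The convexity hypothesis forces the gaps $t_n-t_{n+1}$ to be \emph{small} (at most $\sim b_m/N_m$) once the sequence is dense near $b_m$, so the translates $\phi_m(\cdot+t_n)$ at scale $2^{-k_m}\sim b_m$ overlap heavily rather than having disjoint supports. Moreover, for $s>1/4$ one has $\rho(s)>1$, so $N_m\gg b_m^{-\rho(s)}\gg b_m^{-1}$ points cannot possibly occupy distinct $b_m$-clusters inside $[0,t_1]$. The paper avoids building a single divergent $f$ altogether: one first passes from $(d)$ to the local weak-type inequality $(c)$ via Stein--Nikishin theory (Proposition~\ref{aeconvnec}), and then disproves $(c)$ under $\{t_n\}\notin\ell^{\rho(s),\infty}$ by testing on a family of single frequency-localized bumps $f_{\la_j}$. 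Convexity is used in exactly the opposite way from your sketch: the small gaps guarantee that for every $x$ in an interval of length $\sim b_j$ there is some $t_{n(x)}$ with $|x-t_{n(x)}|\le\la_j^{-1}$, whence $|S^1 f_{\la_j}(x,t_{n(x)})|\ge 1/2$ on that interval, forcing $b_j\lss\|f_{\la_j}\|_{H^s}^2\sim\la_j^{2s-1}$ and a contradiction as $M_j\to\infty$.
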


 The convexity condition is 
  satisfied  for the model case $t_n=n^{-\gamma}$ with $\gamma>0$ and thus Theorems \ref{aeconv}, \ref{aeconva=1} and  the known results for $s=1/4$, when $a>1$,  yield

\begin{corol}\label{corga}
 Let  $0<\gamma<\infty$. 

 (i) If $a>1$, then
$\lim_{n\to \infty}\,S^af(x,n^{-\gamma})=f(x)$ a.e. 
 holds  for every $f\in H^s(\bbR)$ if and only if 
$s\geq\min\{\frac{a}{2\gamma+4},\frac{1}{4}\}$.

(ii) If $0<a<1$, then
$\lim_{n\to \infty}\,S^af(x,n^{-\gamma})=f(x) \; \textrm{a.e.}$ 
  holds for every $f\in H^s(\bbR)$ if and only if 
$s\geq\frac{a}{2\gamma+4}$.

(iii) If $a=1$, then 
$\lim_{n\to \infty}\,S^1f(x,n^{-\gamma})=f(x) \; \textrm{a.e.}$ 
  holds for every $f\in H^s(\bbR)$ if and only if 
$s\geq\frac{1}{2\gamma+2}$.
\end{corol}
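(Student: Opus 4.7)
The plan is to reduce the corollary to a routine computation that combines the stated theorems with the classical results for the full means. Three verifications are needed: (i) that $t_n=n^{-\gamma}$ satisfies the hypotheses of Theorems \ref{aeconv} and \ref{aeconva=1}; (ii) the exact exponent $r$ for which $\{n^{-\gamma}\}\in \ell^{r,\infty}(\bbN)$; (iii) a case-by-case match of the critical Sobolev exponent.

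First I would note that $t_n=n^{-\gamma}$ is decreasing with $t_n\to 0$, and that the convexity of $x\mapsto x^{-\gamma}$ on $(0,\infty)$ for $\gamma>0$ (the second derivative is $\gamma(\gamma+1)x^{-\gamma-2}>0$) implies that $t_n-t_{n+1}$ is also decreasing in $n$. Next, since $\#\{n\in\bbN: n^{-\gamma}>b\}\asymp b^{-1/\gamma}$ as $b\to 0^+$, we have
\[
\{n^{-\gamma}\}\in \ell^{r,\infty}(\bbN)\quad \Longleftrightarrow\quad r\ge 1/\gamma.
\]

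For part (ii) with $0<a<1$: Theorem \ref{aeconv} applies in the range $0<s<a/4$, and the equivalence (a)$\Leftrightarrow$(d) says that a.e.\ convergence holds iff $\frac{2s}{a-4s}\ge \frac{1}{\gamma}$, i.e.\ $s\ge \frac{a}{2\gamma+4}$. Since $\frac{a}{2\gamma+4}<a/4$ for every $\gamma>0$, this already identifies the critical exponent; for $s\ge a/4$ the convergence follows from Wang's result cited in the introduction. For part (iii) with $a=1$: Theorem \ref{aeconva=1} applies for $0<s<1/2$ and gives convergence iff $\frac{2s}{1-2s}\ge \frac{1}{\gamma}$, i.e.\ $s\ge \frac{1}{2\gamma+2}$. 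Since $\frac{1}{2\gamma+2}<1/2$ and Sobolev embedding handles $s\ge 1/2$ trivially, this finishes the case.

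For part (i) with $a>1$, Theorem \ref{aeconv} only covers $s<1/4$, so here I would combine it with the Carleson--Sj\"olin result that a.e.\ convergence holds for every $f\in H^{1/4}$. In the subrange $s<1/4$, convergence along $\{n^{-\gamma}\}$ holds iff $\frac{2s}{a-4s}\ge \frac{1}{\gamma}$, i.e.\ $s\ge \frac{a}{2\gamma+4}$; in the range $s\ge 1/4$ it holds unconditionally. Comparing the two thresholds gives the stated $\min\{\frac{a}{2\gamma+4},\frac14\}$: if $\gamma\ge 2a-2$ the binding constraint is $\frac{a}{2\gamma+4}$, otherwise it is $1/4$. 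The sharpness in the failure direction for $s$ below this min likewise splits into the two subranges and is supplied either by Theorem \ref{aeconv} (when $s<1/4$) or by the classical Dahlberg--Kenig/Sj\"olin counterexamples at $s<1/4$. No step presents a genuine obstacle; the only care required is tracking which theorem handles each subrange and verifying that the endpoint $s=1/4$ (for $a>1$) and the trivial regime $s\ge 1/2$ (for $a=1$) are consistent with the Lorentz-space computation.
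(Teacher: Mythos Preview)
Your proposal is correct and follows exactly the paper's approach: verify that $\{n^{-\gamma}\}$ is decreasing and convex, compute that $\{n^{-\gamma}\}\in\ell^{r,\infty}$ iff $r\ge 1/\gamma$, and then combine Theorems \ref{aeconv} and \ref{aeconva=1} with the classical $H^{1/4}$ result of Carleson--Sj\"olin for $a>1$. Two minor clean-ups: the reference you call ``Wang'' is Walther \cite{Wa}, and your separate appeals to Walther (for $s\ge a/4$ when $a<1$) and to Sobolev embedding (for $s\ge 1/2$ when $a=1$) are unnecessary and slightly imprecise at the endpoints --- once convergence is established at the critical exponent $s_0$, the inclusion $H^s\subset H^{s_0}$ for $s\ge s_0$ already gives convergence in the entire upper range.
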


This answer for the sequence $\{n^{-\gamma}\}$  reveals a perhaps surprising  phenomenon for the case $a>1$, namely that there is a gain over the general pointwise convergence result when $\gamma>  2(a-1)$, but not when  $0<\gamma\leq 2(a-1)$. When $0<a\le 1$,   we have for all $\gamma\in (0,\infty)$ a gain over the general convergence result. The same remarks apply to the local $H^s\to L^2(B)$ maximal inequality. In contrast we get for the global maximal operator  and 
$a\neq 1$:

\begin{corol}\label{corgaglobal}
 Let  $0<\gamma<\infty$ and $a\in (0,\infty)\setminus\{1\}$. Then the global maximal function inequality
 \[ \|\sup_n |S^af(x,n^{-\gamma})|\|_{L^2(\bbR)}\le C 
\|f\|_{H^s}\]
holds for some $C>0$ and  all  $f\in H^s$ if and only if $s\geq\frac{a}{2\gamma+4}$.
\end{corol}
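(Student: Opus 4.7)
The plan is to deduce Corollary \ref{corgaglobal} directly from Theorem \ref{globmax} applied to the explicit sequence $t_n = n^{-\gamma}$, after a short computation of its $\ell^{r,\infty}$ index.

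First I would check the hypotheses of Theorem \ref{globmax}. The sequence $n^{-\gamma}$ is strictly decreasing to zero, and since the smooth function $x\mapsto x^{-\gamma}$ is convex on $(0,\infty)$, its first differences $n^{-\gamma}-(n+1)^{-\gamma}$ are also decreasing. Next I would record (as already noted in the introduction) that $\{n^{-\gamma}\}\in \ell^{r,\infty}$ if and only if $r\ge 1/\gamma$, because $\#\{n\in \bbN : n^{-\gamma}>b\}$ is comparable to $b^{-1/\gamma}$. Substituting $r=2s/(a-4s)$ and solving $\frac{2s}{a-4s}\ge \frac{1}{\gamma}$ under the standing assumption $s<a/4$ yields, after the trivial manipulation $2s\gamma\ge a-4s$, precisely the condition $s\ge \frac{a}{2\gamma+4}$.

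For the sufficiency, when $s\in [\frac{a}{2\gamma+4}, a/4)$ Theorem \ref{globmax} applies directly and yields the global maximal inequality. For $s\ge a/4$, which lies outside the scope of Theorem \ref{globmax}, I would argue by the trivial Sobolev monotonicity $\|f\|_{H^{s'}}\le \|f\|_{H^s}$ for any $s'\le s$: since $\frac{a}{2\gamma+4}<a/4$ for every $\gamma>0$, an intermediate $s'\in [\frac{a}{2\gamma+4}, a/4)$ exists, the maximal inequality for $s'$ holds by the previous case, and it immediately upgrades to one for $s$. For the necessity, if $s<\frac{a}{2\gamma+4}$ then in particular $s<a/4$, and the implication (b)$\Rightarrow$(a) of Theorem \ref{globmax} together with $\{n^{-\gamma}\}\notin \ell^{2s/(a-4s),\infty}$ forces the inequality to fail.

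No substantive obstacle is anticipated, as Theorem \ref{globmax} is invoked as a black box; the entire argument amounts to locating the threshold through the $\ell^{r,\infty}$ index computation and dispatching the easy upper range $s\ge a/4$ by Sobolev embedding. The mildest subtlety is simply remembering to handle that upper range separately, since the equivalence statement of Theorem \ref{globmax} is stated only for $0<s<a/4$.
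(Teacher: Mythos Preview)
Your proposal is correct and is essentially the paper's intended argument: the paper states Corollary \ref{corgaglobal} as an immediate consequence of Theorem \ref{globmax} applied to $t_n=n^{-\gamma}$, using the introduction's observation that $\{n^{-\gamma}\}\in\ell^{r,\infty}$ iff $r\ge 1/\gamma$. Your explicit handling of the range $s\ge a/4$ via Sobolev monotonicity is a sensible completeness check that the paper leaves tacit.
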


\begin{remarks}
(i) Our results for the special case $\{n^{-\gamma}\}$, $a\neq 1$ as stated in Corollaries \ref{corga}, \ref{corgaglobal} 
were already incorporated in the 2016 thesis \cite{D} of the first named author. Moreover sufficiency in Theorem \ref{aeconv}, merely for decreasing sequences but under the more restrictive assumption $\{t_n\}\in \ell^r$ for $r< \frac{2s}{a-4s}$, 
follows already from Proposition 1.6 in \cite{D}. 

(ii) The problem of convergence of Schr\"odinger means $S^a(f, t_n)$ for a decreasing sequence $\{t_n\}$  was independently considered  in recent papers by  Sj\"olin \cite{Sj2018} and   
by Sj\"olin and Str\"omberg \cite{SjStr2019}. Their conditions are more restrictive, but apply in all dimensions.
In \cite{Sj2018} it is proved for $a>1$ that the condition
$\{t_n\}\in \ell^{2s/a}$ is sufficient for pointwise convergence. This is  improved in
\cite{SjStr2019} where for $s\le 1/2$, $a>2s$, 
 the condition $\{t_n\}\in\ell^r$ for $r<\frac{2s}{a-2s}$ is shown to be sufficient for pointwise convergence. Proposition \ref{endpt} yields an improvement of these results and 
Theorem \ref{aeconv} gives the  optimal  result for decreasing convex  sequences.


(iii) For $a\neq 1$ there are natural  analogous open questions of necessary and sufficient conditions in higher dimensions, given the recent groundbreaking  results for the full local Schr\"odinger  maximal operator 
in \cite{DGL}, \cite{DZ} which are sharp up to endpoints. 

(iv) For $0<a<1$ there is still the open problem whether  $S^af(x,t)\to f(x)$ a.e. holds for all $f\in H^{a/4}(\bbR)$. Likewise there is the problem of a global bound for the maximal function if $s=a/4$, and $a>1$.
One can show using a variant of the arguments in \cite{Wa}, \cite{Sj2} that
a.e. convergence holds in the  Besov space $B^{a/4}_{2,1}(\bbR)$ which is properly contained in
$B^{a/4}_{2,2}\equiv H^{a/4}$, see Proposition \ref{besovprop}. For the case $a=1$ we have pointwise convergence in $B^{1/2}_{2,1}(\bbR)$, but pointwise convergence fails for some functions in  $H^{1/2}(\bbR)$, see Proposition \ref{besovpropa=1}.
\end{remarks}

\subsection*{\it This paper}
In \S\ref{suffsect} we show for decreasing sequences that the $\ell^{r(s),\infty}$ condition is sufficient for pointwise convergence and the appropriate boundedness properties of the maximal operators. The necessity for decreasing convex sequences (converging to $0$) is proved in \S\ref{negres}.
The case $a=1$ is separately considered in \S\ref{a=1sect}.
In \S\ref{nonaeconv} we include a short appendix  regarding the  relevant  application of Stein-Nikishin theory.

\subsection*{\it Acknowledgement} We would like to thank Per Sj\"olin and Jan-Olov Str\"omberg for their interest and for pointing out an error in a previous version, concerning the term $\cE_1$ in the proof of Proposition \ref{endpt}.

\section{Upper bounds for maximal functions}\label{suffsect}
In the present section we prove maximal function results which 
imply the  positive results of the theorems stated in the introduction. We already know the local estimate
\Be \label{KRSj}\Big\|\sup_{t\in[0,1]}|S^af(\cdot,t)|\Big\|_{L^2(B)}\leq C\|f\|_{H^{1/4}},\Ee
which was established by Kenig and Ruiz  \cite{KR} when $a=2$ and Sj\"olin \cite{Sj} for general $a>1$. 
In view of \eqref{KRSj} it  now suffices  to give the proof of the $L^2(\bbR)$ bound 
in part (b) of Theorem \ref{globmax}, under the assumption of $\{t_n\}\in \ell^{\frac{2s}{a-4s}}$, whenever $s<a/4$.

Throughout this section we assume that $\{t_n\}$ is decreasing but we drop the convexity assumption in the introduction. Without loss of generality (dropping a finite number of terms in the sequence) we can assume that $t_n\in (0,1)$ for all $n\in \bbN$. We first restrict our attention to the frequency localized operator
$$S^a_{\la}f(x,t)=\int_{\R}e^{i(x\xi+t|\xi|^a)}\wh f(\xi)\chi(\xi/\la)\,\tfrac{d\xi}{2\pi},$$
where $\chi\in C^{\oo}$ is a real-valued, smooth  function, supported in $\{1/2\leq|\xi|\leq1\}$. The following   result is a variant of the inequality given in \cite{KR}:

\begin{prop}\label{smint}
If $J\sbs[0,1]$ is an interval and $0<a\neq1$, then
$$\big\|\sup_{t\in J}|S^a_{\la}f(\cdot,t)|\big\|_{L^2(\R)}\leq C(1+|J|^{1/4}\la^{a/4})\|f\|_2.$$
\end{prop}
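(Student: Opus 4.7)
The plan is to combine a linearization of the maximal operator with a $TT^*$ argument and a stationary-phase kernel estimate. First, by a standard measurable-selection argument,
$$\Big\|\sup_{t\in J}|S^a_\la f(\cdot,t)|\Big\|_{L^2(\R)}=\sup_{t(\cdot)}\|Tf\|_{L^2(\R)},$$
where the supremum is taken over measurable $t:\R\to J$ and $Tf(x):=S^a_\la f(x,t(x))$. It therefore suffices to prove $\|T\|_{L^2\to L^2}\le C(1+|J|^{1/4}\la^{a/4})$ uniformly in $t(\cdot)$, which I would obtain via $\|T\|_{L^2\to L^2}^2=\|TT^*\|_{L^2\to L^2}$, the kernel of $TT^*$ being
$$K(x,y)=\frac{1}{(2\pi)^2}\int_\R e^{i\Phi(\xi)}|\chi(\xi/\la)|^2\,d\xi,\qquad \Phi(\xi)=(x-y)\xi+(t(x)-t(y))|\xi|^a.$$

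The heart of the proof is a pointwise bound on $K$ via stationary phase applied to $\Phi$. Computing $\Phi'(\xi)=(x-y)+a(t(x)-t(y))|\xi|^{a-1}\sgn\xi$ and $\Phi''(\xi)=a(a-1)(t(x)-t(y))|\xi|^{a-2}\sgn\xi$, one sees that the hypothesis $a\ne 1$ is exactly what guarantees nondegeneracy of $\Phi''$ at the critical point. A critical point with $|\xi|\sim\la$ exists only when $|x-y|\sim\la^{a-1}|t(x)-t(y)|$; on this set, provided $|t(x)-t(y)|\gtrsim\la^{-a}$, stationary phase gives
$$|K(x,y)|\lesssim \la\cdot(\la^a|t(x)-t(y)|)^{-1/2}\sim\la^{1/2}|x-y|^{-1/2}.$$
If $|t(x)-t(y)|\lesssim\la^{-a}$ (which in the critical regime is equivalent to $|x-y|\lesssim\la^{-1}$) one uses the trivial bound $|K|\lesssim\la$. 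Outside the critical window one has $|\Phi'|\gtrsim\max(|x-y|,\la^{a-1}|t(x)-t(y)|)$ throughout the support of $\chi(\cdot/\la)$, and repeated integration by parts produces arbitrary polynomial decay. Collecting these cases,
$$|K(x,y)|\lesssim \la\,\bbone_{\{|x-y|\le \la^{-1}\}}+\la^{1/2}|x-y|^{-1/2}\bbone_{\{\la^{-1}\le|x-y|\le C\la^{a-1}|J|\}}+R(x,y),$$
where $R$ is negligible in the sense that $\sup_x\int|R(x,y)|\,dy+\sup_y\int|R(x,y)|\,dx\le C$.

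Schur's test then yields
$$\sup_x\int|K(x,y)|\,dy\lesssim 1+\la^{1/2}\int_{\la^{-1}}^{\la^{a-1}|J|}r^{-1/2}\,dr\lesssim 1+|J|^{1/2}\la^{a/2},$$
and similarly for $\sup_y\int|K(x,y)|\,dx$ by the symmetry $|K(x,y)|=|K(y,x)|$. Hence $\|TT^*\|_{L^2\to L^2}\lesssim 1+|J|^{1/2}\la^{a/2}$, and using $\sqrt{1+u}\le 1+\sqrt{u}$ gives $\|T\|_{L^2\to L^2}\lesssim 1+|J|^{1/4}\la^{a/4}$. The main obstacle is the kernel estimate: one must verify the integration-by-parts bound uniformly across $\chi(\cdot/\la)$ and carefully treat the degenerate regime $|t(x)-t(y)|\lesssim\la^{-a}$ in which stationary phase is not directly applicable. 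This is precisely where $a\ne 1$ enters, through the nondegeneracy $\Phi''\not\equiv 0$; the case $a=1$ loses this dispersion and is dealt with separately in \S\ref{a=1sect}.
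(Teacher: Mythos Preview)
Your proposal is correct and follows essentially the same route as the paper: linearize the maximal operator via a measurable selection $t(\cdot)$, apply $TT^*$, bound the kernel using van der Corput's lemma (where $|x-y|\lesssim\la^{a-1}|t(x)-t(y)|$) and integration by parts (where $|x-y|\gg\la^{a-1}|t(x)-t(y)|$), and finish with Schur's test. The only differences are cosmetic: the paper first rescales to unit frequency before running $TT^*$, and it does not carve out the region $|x-y|\le\la^{-1}$ separately since the van der Corput bound $(\la|x-y|)^{-1/2}$ is already integrable near the diagonal; also, your formula for $\Phi''$ has a spurious $\sgn\xi$ (the correct expression is $a(a-1)(t(x)-t(y))|\xi|^{a-2}$), but this is harmless as only $|\Phi''|$ is used.
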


\medskip

\begin{proof} We use  the Kolmogorov-Seliverstov-Plessner method,   by linearizing the maximal operator: let $x\mapsto t(x)$ be a measurable function, with values in $J$.
It will then suffice to prove
$$\Big(\int_{\R}|S_{\la}^a(x,t(x))|^2\,dx\Big)^{1/2}\leq C(1+|J|^{1/4}\la^{a/4})\|f\|_2,$$ where the constant $C$ is independent of $t(\cdot)$ and $f$.
Notice that
$$S^a_\la f(x,t(x))=\int e^{i(x\xi+t(x)|\xi|^a)}\widehat f(\xi) \chi(\xi/\la)\, \tfrac{d\xi}{2\pi} =\la\,  T^a_\la[\widehat f(\la\cdot)](x)$$
where
$$T_\la^a g(x) =\int e^{i(\la x\xi+\la^a t(x)|\xi|^a)} \chi(\xi)g(\xi)\, \tfrac{d\xi}{2\pi}.$$
Since $\|\widehat f(\la\cdot)\|_2=c\la^{-1/2}\|f\|_2$ we need to show that
$$\|T^a_\la\|_{L^2\to L^2}\lss \la^{(a-2)/4}|J|^{1/4}+\la^{-1/2},$$
 which in turn follows from
\begin{eqnarray}\label{TT*}
\|T^a_\la (T^a_\la)^*\|_{L^2\to L^2}\lss \la^{(a-2)/2}|J|^{1/2}+\la^{-1}.
\end{eqnarray}
The kernel of $T_\la^a (T_\la^a)^*$ is
$$K_\la^a(x,y)= \int e^{i[\la (x-y)\xi+\la^a(t(x)-t(y))|\xi|^a]} \chi^2(\xi)\,\tfrac{d\xi}{2\pi}.$$
and the derivative of the phase $\Phi_\la^a(\xi)=\la (x-y)\xi+\la^a(t(x)-t(y))|\xi|^a$ is equal to
$$(\Phi_\la^a)'(\xi)=\la(x-y)+a\la^a(t(x)-t(y))\,(\sgn \xi)\, |\xi|^{a-1}.$$
Therefore, if $|x-y|\gg\la^{a-1}|t(x)-t(y)|$, we have that $|(\Phi_\la^a)'(\xi)|\grs\la|x-y|$ and integration by parts gives
$$|K_\la^a(x,y)|\lss_N (\la|x-y|)^{-N}.$$
In the case where $|x-y|\lss \la^{a-1}|t(x)-t(y)|$ we use van der Corput's lemma. The second derivative of the phase is
$(\Phi^a_{\la})''(\xi)=c_a\la^a(t(x)-t(y))|\xi|^{a-2}$, hence
$$|K_\la^a(x,y)|\lss \la^{-a/2}|t(x)-t(y)|^{-1/2}\lss(\la|x-y|)^{-1/2}.$$

 Thus $\int_\R|K_\la^a(x,y)|\,dy$ can be estimated by
\begin{align*}
&\int\limits_{\substack{|x-y|\lss\\ \la^{a-1}|t(x)-t(y)|}}\la^{-1/2}|x-y|^{-1/2}dy+
\int\limits_{\substack{|x-y|\gg \\ \la^{a-1}|t(x)-t(y)|}} ( 1+\la|x-y|)^{-N} dy\\
&\quad\le\int_{\substack{|x-y|\lss \la^{a-1}|J|}}\la^{-1/2}|x-y|^{-1/2}dy+\int_{\R} ( 1+\la|x-y|)^{-N} dy\\
&\quad\lss \la^{(a-2)/2}|J|^{1/2}+\la^{-1}\,.
\end{align*}
Therefore $\sup_{x\in\R}\int|K_\la^a(x,y)|\,dy\lss\la^{(a-2)/2}|J|^{1/2}+\la^{-1}$ and by symmetry we get the same bound for
$\sup_{y\in \R}\int|K_\la^a(x,y)|\,dx$.
Hence Schur's test gives the required bound \eqref{TT*}.
\end{proof}


We now use Proposition \ref{smint} to prove a sharp result for the frequency-localized operaors $S^a_\la$.

\begin{lemma}\label{gen} 
Let $0<a\neq 1$, $0<r<\infty$ and let $\{t_n\}$ be a sequence in $[0,1]$ which belongs to $\ell^{r,\infty}$.
 Then for $\la>1$
\[ \Big\|\sup_{n}|S_\la^a f(\cdot,t_n)|\Big\|_{L^2(\R)}\leq C
\la^{\frac{ar}{2+4r}}
\|f\|_{L^2(\bbR)}.\] 
\end{lemma}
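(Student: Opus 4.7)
The plan is to dyadically decompose the sequence $\{t_n\}$ according to size, combine Proposition~\ref{smint} with the crude bound coming from $\|S_\la^a f(\cdot,t)\|_2\le\|f\|_2$ for each fixed $t$, and balance the two at the right scale. The only information about $\{t_n\}$ that enters is the cardinality bound
\[N_k:=\#I_k\lesssim 2^{kr},\qquad I_k:=\{n:2^{-k-1}<t_n\le 2^{-k}\},\]
which is immediate from the hypothesis $\{t_n\}\in\ell^{r,\infty}$.

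Let $k_0\in\bbN$ be such that $2^{-k_0}\approx\la^{-a}$. All indices $n$ with $t_n\le 2^{-k_0}$ are first lumped into a single term: by Proposition~\ref{smint} applied with $J=[0,2^{-k_0}]$,
\[\Big\|\sup_{n:\,t_n\le 2^{-k_0}}|S_\la^a f(\cdot,t_n)|\Big\|_{L^2(\R)}\le C(1+2^{-k_0/4}\la^{a/4})\|f\|_2\lesssim\|f\|_2,\]
so it remains to estimate $B_k:=\|\sup_{n\in I_k}|S_\la^a f(\cdot,t_n)|\|_{L^2(\R)}$ for $0\le k\le k_0$. For each such $k$ I have two a priori bounds: the trivial cardinality bound $B_k\le (\sum_{n\in I_k}\|S_\la^a f(\cdot,t_n)\|_2^2)^{1/2}\lesssim 2^{kr/2}\|f\|_2$, from $\sup\le \ell^2$-sum together with $L^2$-boundedness of $S_\la^a f(\cdot,t_n)$; and Proposition~\ref{smint} with $J=[0,2^{-k}]$, giving $B_k\lesssim 2^{-k/4}\la^{a/4}\|f\|_2$. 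These two bounds cross at $2^{k_*}\approx \la^{a/(2r+1)}$.

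Since $\sup_n|S_\la^a f(\cdot,t_n)|$ is a maximum over finitely many blocks together with the small-time term, the standard bound $\|\max_k A_k\|_2^2\le\sum_k\|A_k\|_2^2$ yields
\[\Big\|\sup_n|S_\la^a f(\cdot,t_n)|\Big\|_2^2\lesssim \|f\|_2^2+\sum_{k=0}^{k_0}B_k^2.\]
Using the trivial bound for $k\le k_*$ and Proposition~\ref{smint} for $k_*<k\le k_0$, both geometric sums are controlled by their values at the crossover:
\[\sum_{k\le k_*}2^{kr}\lesssim \la^{ar/(2r+1)},\qquad \sum_{k_*<k\le k_0}2^{-k/2}\la^{a/2}\lesssim 2^{-k_*/2}\la^{a/2}=\la^{ar/(2r+1)},\]
so $\sum_k B_k^2\lesssim \la^{ar/(2r+1)}\|f\|_2^2$ and taking a square root yields the desired $\la^{ar/(4r+2)}\|f\|_2$.

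There is no genuine analytic obstacle beyond Proposition~\ref{smint}; the essential points are to identify the correct crossover scale $2^{k_*}\approx \la^{a/(2r+1)}$ between the cardinality and local-maximal bounds, and to absorb all $t_n\le \la^{-a}$ into one block so that only finitely many dyadic scales contribute to the $\ell^2$-sum. A quick sanity check is that naively partitioning each $I_k$ into sub-intervals before applying Proposition~\ref{smint} would produce a strictly worse exponent, confirming that the direct per-block application of Proposition~\ref{smint} is the right move.
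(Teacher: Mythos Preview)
Your proof is correct and follows essentially the same approach as the paper: split according to the size of $t_n$, use Proposition~\ref{smint} for the small-$t_n$ block and the crude $\ell^2$-cardinality bound for the large-$t_n$ block, and balance at the crossover scale $b\approx\la^{-a/(1+2r)}$. The only difference is cosmetic: the paper uses a single threshold $b$ rather than a full dyadic decomposition, which makes the argument a line or two shorter since the geometric sums you compute collapse to the two endpoint terms anyway.
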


\smallskip

\begin{proof} 
We start by writing $$\Big\|\sup_{n}|S_{\la}^af(\cdot,t_n)|\Big\|_{2}\leq\Big\|\sup_{n:\, t_n\leq b}|S_{\la}^af(\cdot,t_n)|\Big\|_{2}+\Big\|\sup_{n:\, t_n>b}|S_{\la}^af(\cdot,t_n)|\Big\|_{2}.$$
By Proposition \ref{smint} we can bound the first term by $b^{1/4}\la^{a/4}\|f\|_2$. On the other hand, by using Plancherel's theorem and our assumption, we get

\begin{align*}\Big\|\sup_{n:\, t_n>b}|S_{\la}^af(\cdot,t_n)|\Big\|_{L^2(\R)}&
\le \Big(\sum_{n:\, t_n>b}\|S_{\la}^af(\cdot,t_n)\|_{2}^2\Big)^{1/2}\\
&\leq\#(\{n: t_n>b\})^{1/2}\|f\|_2\lss b^{-r/2}\|f\|_2.
\end{align*}
We therefore have  $$\Big\|\sup_{n}|S_{\la}^af(\cdot,t_n)|\Big\|_{2}\lss(b^{1/4}\la^{a/4}+b^{-r/2})\|f\|_2$$
and choosing $b$ such that $b^{1/4}\la^{a/4}=b^{-r/2}$, namely $b=\la^{-\frac{a}{1+2r}}$, finishes the proof.
\end{proof}

We wish to apply the Lemma \ref{gen} for $\la=2^k$, $k>1$.  
A more refined argument is needed to combine the dyadic scales.

\begin{prop}\label{endpt} 
Let $0<a\neq 1$, and assume that $\{t_n\}\in \ell^{r,\infty}(\bbN)$ is decreasing.
 Then 
\Be\label{strongtypemaxresult}
\big\|\sup_{n}|S^af(\cdot,t_n)|\big\|_{L^2(\R)}\leq C\|f\|_{H^s}, \quad s= \frac{ar}{2+4r}.\Ee
Moreover $S^af(x,t_n)\to f(x)$ a.e. whenever $f\in H^\sigma$ for $\sigma\ge \min \{\frac 14, \frac{ar}{2+4r}\}$.
\end{prop}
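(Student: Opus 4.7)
My strategy is a dyadic Littlewood--Paley decomposition combined with Lemma~\ref{gen}. Write $f=\sum_{k\ge 0}P_kf$, with $\widehat{P_kf}$ supported in $|\xi|\sim 2^k$ for $k\ge 1$ and in $|\xi|\lesssim 1$ for $k=0$. The low-frequency piece contributes $\lesssim \|P_0f\|_2\le \|f\|_{H^s}$ via Proposition~\ref{smint}. For each $k\ge 1$, Lemma~\ref{gen} supplies $\|\sup_n|S^aP_kf(\cdot,t_n)|\|_2\le C\,2^{ks}\|P_kf\|_2$, with $s=\tfrac{ar}{2+4r}$.

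To reassemble the frequency bands without losing the sharpness of the Sobolev norm, I would split simultaneously in time and frequency at the threshold $b_k:=2^{-ka/(1+2r)}$, which is the balance point of the two competing estimates in the proof of Lemma~\ref{gen}. For each $n$, let $k(t_n)$ be the largest integer with $b_k\ge t_n$; decompose
\begin{equation*}
S^af(x,t_n) = S^aP_{\le k(t_n)}f(x,t_n) + S^aP_{>k(t_n)}f(x,t_n).
\end{equation*}
The large-time high-frequency tail is handled cleanly by Plancherel together with the pointwise bound $\sup_n|y_n|^2\le\sum_n|y_n|^2$:
\begin{equation*}
\|\sup_n|S^aP_{>k(t_n)}f(\cdot,t_n)|\|_2^2\le \sum_n\|P_{>k(t_n)}f\|_2^2 = \sum_k\|P_kf\|_2^2\,\#\{n:t_n>b_k\}\lesssim \sum_k 2^{2ks}\|P_kf\|_2^2 = \|f\|_{H^s}^2,
\end{equation*}
since $\#\{n:t_n>b_k\}\lesssim b_k^{-r}=2^{2ks}$ by the $\ell^{r,\infty}$ hypothesis. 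The complementary small-time low-pass contribution $\sup_n|S^aP_{\le k(t_n)}f(x,t_n)|\le \sup_k\sup_{t\in[0,b_k]}|S^aP_{\le k}f(x,t)|$ must then be bounded by $\|f\|_{H^s}$ using Proposition~\ref{smint} frequency-by-frequency, together with an almost-orthogonality argument keyed to the nested intervals $[0,b_k]$ and the disjoint spectral supports of the $P_kf$.

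For the a.e.\ convergence assertion, once the maximal inequality holds at $\sigma=s$, convergence for all $f\in H^\sigma$ with $\sigma\ge s$ follows by approximating $f$ in $H^\sigma$-norm by Schwartz functions (for which $S^af(x,t_n)\to f(x)$ uniformly) and using the maximal bound to transfer convergence to the full space. The range $\sigma\ge 1/4$ for $a>1$ is alternatively covered by the Kenig--Ruiz--Sj\"olin bound~\eqref{KRSj} via the continuous-time maximal operator.

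\textbf{Main obstacle.} The delicate step is the small-time low-pass bound. A naive Minkowski summation $\sup_k\le\sum_k$ across frequency scales yields only the larger Besov norm $\|f\|_{B^s_{2,1}}$ in place of $\|f\|_{H^s}$. Achieving the sharper Sobolev bound at the endpoint $\{t_n\}\in\ell^{r,\infty}$ requires genuine almost-orthogonality between the frequency bands $P_kf$---not just summing their $L^2$-norms---and this careful orthogonality analysis, which exploits the telescoping structure of the thresholds $b_k$, is the main technical hurdle.
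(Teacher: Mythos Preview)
Your decomposition coincides with the paper's: the threshold $b_k=2^{-ka/(1+2r)}$ is exactly the scale at which the paper partitions the $t_n$, and your high-frequency piece is precisely the paper's term $\cE_3$, handled identically by Plancherel and the cardinality bound $\#\{n:t_n>b_k\}\lesssim 2^{2ks}$. So that half is correct.

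The gap is where you say it is. The reduction $\sup_n|S^aP_{\le k(t_n)}f(\cdot,t_n)|\le\sup_k\sup_{t\in[0,b_k]}|S^aP_{\le k}f(\cdot,t)|$ is legitimate but leads nowhere directly, because $P_{\le k}f$ is not orthogonal across $k$ and you cannot pass to $\ell^2$ in $k$. The missing idea---and this is what the paper does---is to group \emph{in time first}: set $\fN_l=\{n:b_{l+1}<t_n\le b_l\}$, so that $k(t_n)=l$ for $n\in\fN_l$, and dominate $\sup_n$ by $(\sum_l(\cdot)^2)^{1/2}$ over $l$ \emph{before} opening up $P_{\le l}=\sum_{j\ge0}P_{l-j}$. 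Minkowski then pulls the $j$-sum outside the $\ell^2$ in $l$, and for fixed $j$ the pieces $P_{l-j}f$ are orthogonal in $l$. Proposition~\ref{smint} applied with $J=[0,b_l]$ and $\la=2^{l-j}$ gives
\[
\big\|\sup_{n\in\fN_l}|S^aP_{l-j}f(\cdot,t_n)|\big\|_2\lesssim\big(1+b_l^{1/4}2^{(l-j)a/4}\big)\|P_{l-j}f\|_2,
\]
and the crucial arithmetic is $b_l^{1/4}2^{(l-j)a/4}=2^{s(l-j)}\cdot 2^{-ja/(4(1+2r))}$: the first factor is absorbed into $\|f\|_{H^s}$ via the $\ell^2$ sum in $l$, and the second gives summable geometric decay in $j$. (When $j$ is so large that $b_l^{1/4}2^{(l-j)a/4}\le1$, the ``$1+$'' dominates; the paper treats this range separately as $\cE_1$ and sums $\sum_k\|P_kf\|_2\le C(s)\|f\|_{H^s}$ by Cauchy--Schwarz, using only $s>0$.) So the ``almost-orthogonality'' you are looking for is not spectral orthogonality of the low-pass blocks $P_{\le k}$, but rather orthogonality in the \emph{time} index $l$ after a diagonal shift $k=l-j$.
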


\smallskip

\begin{proof} Define projection operators $P_k$ by
\begin{align*} 
\widehat{P_0 f}(\xi) &=\bbone_{[-1/2,1/2]}(\xi)\widehat f(\xi)
\\
\widehat{P_k f}(\xi) &=(\bbone_{[2^{k-1}, 2^{k}]}+
\bbone_{[- 2^{k}, -2^{k-1}]})\widehat f(\xi), \quad k\ge 1
\end{align*}
Clearly $P_kP_k=P_k$ and $\sum_{k\geq0}P_kf=f$.


Next, for each integer $l\ge 0$ we set 
\[\fN_l=\big\{n\in \bbN: 2^{-(l+1)\frac{a}{1+2r}}
<t_n\le 
2^{-l\frac{a}{1+2r}}\big\}.\]
By assumption $\{t_n\}\in \ell^{r,\infty} $ there is $C>0$ so that
\Be \label{Nlcard}\#(\fN_l)\le C2^{l\frac{ar}{1+2r}}= C 2^{2ls}.\Ee
We can then write
$$\sup_n|S^af(x,t_n)|\leq
\cE_1(x)+\cE_2(x)+\cE_3(x)$$ where
\begin{align*}
\cE_1(x)
&=\sup_l\sup_{n\in\fN_l}\big|\sum_{k< \frac{l}{1+2r}}S^aP_k f(x,t_n)\big|
\\
\cE_2(x)&=\sup_l\sup_{n\in\fN_l}\big|\sum_{\frac{l}{1+2r}\le k<l}S^aP_k f(x,t_n)\big|
\\
\cE_3(x)&=\sup_l\sup_{n\in\fN_l}\big|\sum_{k\ge l}S^aP_k f(x,t_n)\big|
\end{align*}

We first give the estimate for $\|\cE_3\|_2$. We make the change of variable $k=l+m$ and 
and get
\[\cE_3(x)\le \sum_{m\geq0}\Big(\sum_{l\geq0}\sup_{n\in\fN_l}|S^a P_{l+m}f(x,t_n)|^2\Big)^{1/2}.\]
From this,
\begin{align*}
\|\cE_3\|_2 &\le 
\sum_{m\geq0}\Big(\sum_{l\geq0}\Big\|\sup_{n\in\fN_l}|S^aP_{l+m}f(\cdot,t_n)|\Big\|_{2}^2\Big)^{1/2}
\\
&\le \sum_{m\geq0}\Big(\sum_{l\geq0}\sum_{n\in \fN_l} \Big\|S^aP_{l+m}f(\cdot,t_n)\Big\|_{2}^2\Big)^{1/2}
\\
&\le \sum_{m\geq0}\Big(\sum_{l\geq0}\#(\fN_l) \big\|P_{l+m}f\big\|_{2}^2\Big)^{1/2}
\end{align*}
and using \eqref{Nlcard} this is further estimated by
\begin{align*} 
\sum_{m\geq0}\big(\sum_{l\geq0}2^{2sl}\| P_{l+m}f\|_2^2\big)^{1/2}=\sum_{m\geq0}2^{-ms}\big(\sum_{l\geq0}2^{2s(l+m)}\| P_{l+m}f\|_2^2\big)^{1/2}\lss \|f\|_{H^s}.
\end{align*}

In order to deal with the first and second terms
we use that by definition of $\fN_l$ the $t_n$ with $n\in \fN_l$  lie in the  interval $$J_l=[0, 2^{-l\frac{a}{1+2r}}].$$ 

For the term  $\cE_2$ we make the  change of variables $k=l-j$ and estimate
\begin{align*}\cE_2(x)&\le 
\sup_l\sup_{n\in\fN_l}\big|\sum_{0<j\le \frac{2r}{1+2r}l}S^aP_{l-j}f(x,t_n)\big|
\\
&\leq\Big(\sum_{l\geq0}\big(\sum_{0<j\le \frac{2r}{1+2r}l }\sup_{n\in\fN_l}
|S^aP_{l-j} f(x,t_n)|\big)^2\Big)^{1/2}\\
&\leq\sum_{j>0}\big(\sum_{l\ge j\frac {1+2r}{2r}}\sup_{n\in \fN_l}|S^a P_{l-j}f(x,t_n)|^2\big)^{1/2}.
\end{align*}



We can now use  Proposition \ref{smint}, with $J=J_l$ amd $\la=2^k= 2^{l-j}$. 
Note that
$ l\ge j\frac{1+2r}{r} $ implies that $|J_l|^{\frac 14} 
2^{\frac a4 (l-j)}= 2^{-l\frac{a}{4(1+2r)}} 2^{\frac{a}{4}(l-j)}
\ge 1.$
Using  $P_kP_k=P_k$ we then get
\begin{align*}
\|\cE_2\|_2&\le \sum_{j\geq0}\Big(\sum_{ l\ge j\frac{1+2r}{r}}\Big\|\sup_{n\in\fN_l}|S^aP_{l-j}f(\cdot,t_n)|\Big\|_{L^2(\R)}^2\Big)^{1/2}\\
&\lss\sum_{j\geq0}\Big(\sum_{ l\ge j\frac{1+2r}{2r}}\big[
(1+2^{\frac{a}{4}(l-j)}\,2^{-l\frac{a}{4(1+2r)}}) 
\| P_{l-j}f\|_2\big]^2\Big)^{1/2}\\
&\lc\sum_{j\geq0}\Big(\sum_{l\geq j\frac{1+2r}{2r}}\big[2^{(l-j) \frac a4 ( 1-\frac 1{1+2r})} \,2^{-j \frac{a}{4(1+2r)}}
\| P_{l-j}f\|_2\big]^2\Big)^{1/2}\\
&=\sum_{j\geq0}2^{-j \frac{a}{4(1+2r)}}
\Big(\sum_{l\geq j\frac{1+2r}{2r}}\big[2^{s(l-j)}\| P_{l-j}f\|_2\big]^2\Big)^{1/2}\lc \|f\|_{H^s}. 
\end{align*} 

Finally we consider  the term $\cE_1$ and estimate
\begin{align*}
\|\cE_1\|_2 &\le \big\|\sup_l\sup_{n\in\fN_l}
\sum_{k< \frac{l}{1+2r}}|S^aP_k f(x,t_n)|\|_2
\\
&\le 
\sum_{k\ge 0}\big\|\sup_{l>k(1+2r)} \sup_{n\in \fN_l}
|S^aP_kf (\cdot, t_n)|\big\|_2
\end{align*}
The $t_n$ with $n\in \cup_{l>k(1+2r)}\fN_l$ lie in an interval  of length $O(2^{-ka})$, namely in  $J_{l(k)}$ with
$l(k)=\lfloor
 k(1+2r)\rfloor $. 
We  use again  Proposition \ref{smint}, with $J=J_{l(k)}$ and $\la=2^k$, and  observe that
now $2^{ka/4} |J_{l(k)}|^{1/4} \lc 1$. Thus we can bound
for each $k>0$
\[ \big\|\sup_{l>k(1+2r)} \sup_{n\in \fN_l}
|S^aP_kf (\cdot, t_n)|\big\|_2
\lc  (1+2^{ka/4} |J_{l(k)}|^{1/4})
\|P_kf\|_2 \lc \|P_k f\|_2.\]
We sum in $k$ and deduce that
\[ \|\cE_1\|_2\lc\sum_{k\ge 0}  \|P_kf\|_2 
 \le C(s) \|f\|_{H^s}, \quad s>0.\]

We combine the estimates for $\|\cE_i\|$, $i=1,2,3$ to  finish the proof of the maximal inequality \eqref{strongtypemaxresult}. Since 
$\lim_{t\to 0} S^af(x,t)=f(x)$ for all $x\in \bbR$ whenever $f$ is a Schwartz function, and since Schwartz functions are dense in $H^\sigma$ the stated pointwise convergence result follows from \eqref{KRSj}, \eqref{strongtypemaxresult} by a standard argument (see e.g. \cite{Sj} or \cite{SjStr2019}).
\end{proof}

Finally we mention an  endpoint result 
involving the Besov space 
$B^s_{2,1}(\bbR)$ when $s=a/4$. We do not know whether $B^{a/4}_{2,1}$ can be replaced with $H^{a/4}$ in the following proposition.

\begin{prop} \label{besovprop} Let $a>0$, $a\neq 1$. Then, for all $f\in B^{a/4}_{2,1}(\bbR)$,
\[ \big\|\sup_{t\in [0,1]}|S^a f(\cdot,t)|\big\|_{L^2(\R)}\leq C \|f\|_{B^{a/4}_{2,1}}.\]
\end{prop}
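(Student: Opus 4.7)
The plan is to combine Proposition \ref{smint} (applied with the interval $J=[0,1]$) with a Littlewood--Paley decomposition, together with Minkowski's inequality and the defining sum in the Besov norm.

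First, I would set up the standard Littlewood--Paley decomposition $f=\sum_{k\ge 0}Q_k f$, where $\widehat{Q_k f}$ is supported in $\{|\xi|\sim 2^k\}$ (for $k\ge 1$) or $\{|\xi|\lesssim 1\}$ (for $k=0$), with a smooth cutoff. Recall that
\[\|f\|_{B^{a/4}_{2,1}}\;\sim\;\sum_{k\ge 0}2^{ka/4}\|Q_k f\|_{L^2(\R)}.\]
By Minkowski's inequality applied to $\sup_{t\in[0,1]}|S^a f(\cdot,t)|\le \sum_k \sup_{t\in[0,1]}|S^a Q_k f(\cdot,t)|$, it suffices to bound
\[\bigl\|\sup_{t\in[0,1]}|S^a Q_k f(\cdot,t)|\bigr\|_{L^2(\R)}\;\lesssim\;(1+2^{ka/4})\,\|Q_k f\|_{L^2(\R)}\]
for every $k\ge 0$, and then sum in $k$.

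Second, for each $k\ge 1$ the piece $Q_k f$ is of the form considered in Proposition \ref{smint} with $\lambda=2^k$ (after possibly splitting $Q_k$ into a bounded number of pieces whose smooth frequency cutoffs fit the model $\chi(\xi/\lambda)$ required there; the proof of Proposition \ref{smint} clearly goes through for any fixed smooth cutoff supported in a compact set away from the origin). Applying Proposition \ref{smint} with $J=[0,1]$ and $\lambda=2^k$ gives precisely the desired estimate with constant $1+2^{ka/4}$. For $k=0$ one has a trivial bound $\|\sup_t|S^a Q_0 f|\|_2\lesssim \|Q_0 f\|_2$ since the frequencies are bounded.

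Finally, since $a>0$ we have $1\le 2^{ka/4}$ for $k\ge 0$, so summing yields
\[\bigl\|\sup_{t\in[0,1]}|S^a f(\cdot,t)|\bigr\|_{L^2(\R)}\;\lesssim\;\sum_{k\ge 0}2^{ka/4}\|Q_k f\|_{L^2(\R)}\;\lesssim\;\|f\|_{B^{a/4}_{2,1}}.\]
There is no serious obstacle here: the only point requiring a word of care is matching the frequency localizations of the Besov decomposition to the one in Proposition \ref{smint}, but this is a routine cosmetic adjustment. The proof is essentially immediate once Proposition \ref{smint} is available, and the role of the $\ell^1$ summability in the Besov norm is exactly what is needed to control the sum $\sum_k(1+2^{ka/4})\|Q_k f\|_2$ without any further cancellation.
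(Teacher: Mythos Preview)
Your proposal is correct and follows essentially the same route as the paper: a dyadic frequency decomposition, Minkowski's inequality, Proposition~\ref{smint} with $J=[0,1]$ and $\la=2^k$, and the $\ell^1$ summability in the Besov norm. The paper uses the sharp projections $P_k$ (and the identity $P_kP_k=P_k$) in place of your smooth $Q_k$, but this is exactly the cosmetic matching you already flagged.
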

\begin{proof} Write $f=\sum_{k\ge 0} S^aP_k P_k f$ as in the proof of Proposition \ref{endpt}. By Proposition \ref{smint} we have
\[ \big\|\sup_{t\in [0,1] }S^af\big\|_2 
 \le\sum_{k\ge 0} \big\|  \sup_{t\in [0,1]}| S^aP_k  P_kf| \big\|_2 \lc 
  \sum_{k\ge 0} 2^{ka/4} \| P_k f\|_2 \]  and using Plancherel's  theorem and the definition of Besov spaces via dyadic frequency decompositions we see that the last expression is  dominated by $C\|f\|_{B^{a/4}_{2,1}}$.
\end{proof}

\section{Necessary conditions}\label{negres}

In order to prove necessity in Theorem \ref{aeconv} we use arguments from Nikishin-Stein theory. We include the standard argument for the proof of the following proposition in Appendix \S\ref{nonaeconv}.
\medskip

\begin{prop}\label{aeconvnec}
 Assume that for every $f\in H^s$, the limit 
 $\lim_{n\to\infty}S^af(x,t_n)$ exists for almost every  $x\in \bbR$.
 Then for any compact set $K\subset \bbR$, there is a constant $C_K$, such that for all $\alpha>0$,
\begin{align*}\label{weakestK}\meas(\{x\in K: \sup_n|S^af(x,t_n)|>\al\})\leq C_K\Big(\frac{\|f\|_{H^s}}{\al}\Big)^2.
\end{align*}
\end{prop}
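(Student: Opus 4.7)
The proof is a standard application of Stein--Nikishin theory. I outline the strategy in three stages and identify the main obstacle.

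First (a.e.\ finiteness of the maximal function), since $\lim_n S^a f(x,t_n)$ exists for a.e.\ $x$, the sequence $\{S^a f(x,t_n)\}_n$ is bounded for a.e.\ $x$, so
\[
T^* f(x) := \sup_n |S^a f(x, t_n)|
\]
is a.e.\ finite on $\bbR$ for every $f \in H^s$.

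Second (reduction to $L^2$), the Bessel potential $\Lambda^s := (1 - \partial_x^2)^{s/2}$ is an isometric isomorphism $H^s \to L^2(\bbR)$. Setting $U_n := S^a_{t_n} \circ \Lambda^{-s}$, each $U_n$ is a translation-invariant Fourier multiplier from $L^2(\bbR)$ to measurable functions, and the hypothesis is equivalent to: for every $g \in L^2(\bbR)$, the sequence $\{U_n g(x)\}_n$ converges for a.e.\ $x$, so $U^* g := \sup_n |U_n g|$ is a.e.\ finite.

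Third (apply Stein's theorem), I invoke Stein's maximal principle for sequences of translation-invariant linear operators on $L^p(\bbR)$ in the range $1 \le p \le 2$, applied at the endpoint $p = 2$. This yields the global weak-type bound
\[
\bigl|\{x \in \bbR : U^* g(x) > \alpha\}\bigr| \le C\, \alpha^{-2} \|g\|_2^2.
\]
Substituting $g = \Lambda^s f$ and restricting to $K$ gives the stated local inequality (in fact with $C_K$ independent of $K$).

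The only substantive step is the application of Stein's theorem. Its proof begins with the Banach--Steinhaus principle in $L^0$ (equicontinuity deduced from the closed graph theorem) and Nikishin's factorization, which produce an initial weak $(2, p_0)$-type bound on compact sets for some $p_0 > 0$. One then upgrades $p_0$ to the sharp exponent $2$ by forming random signed sums of widely separated translates of a test function and invoking Khinchin's inequality, which is made possible by the translation invariance of each $U_n$ together with the amenability of $\bbR$. This upgrade is the hard part, and the details are deferred to the appendix indicated in the paper.
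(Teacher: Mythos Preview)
Your outline is correct and lands in the same Stein--Nikishin framework, but the paper's appendix is organized differently and is simpler than you anticipate. The paper passes to the Fourier side (viewing the maximal operator on $L^2(\mu_s)$ with $d\mu_s=(1+|\xi|^2)^s\,d\xi$, which is equivalent to your $\Lambda^s$ conjugation) and then quotes a form of Nikishin's theorem which, for an $L^2$ source, already yields a \emph{weighted} weak $(2,2)$ inequality $\int_{\{M^af>\alpha\}} w\,dx\le \alpha^{-2}\|f\|_{H^s}^2$ with some $w>0$ a.e.; the Khinchin/random-sums upgrade from a lower exponent $p_0$ that you identify as the hard part is not needed at $p=2$. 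The weight is then removed on compacts by an elementary averaging: since $M^a$ commutes with translations one may replace $w$ by $w(\cdot-y)$ for every $y$ and hence by $h*w$ for a continuous probability density $h$, and $h*w$, being continuous and strictly positive, is bounded below on each compact $K$. Note also that you assert a \emph{global} weak-type bound ($C_K$ independent of $K$), which is stronger than the proposition claims and stronger than what this averaging delivers, since $h*w$ need not be bounded below on all of $\bbR$; only the local bound is required for the necessity arguments in \S\ref{negres}.
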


We also need the following elementary lemma.
\begin{lemma}\label{elem} Let  $\{t_n\}$ be a sequence of positive numbers in $[0,1]$, let $0<r<\infty$  and assume
that $\sup_{b>0} b^r \#(\{n: b<t_n\le 2b\} ) \le A$.
Then $\{t_n\}\in\ell^{r,\infty}$.
\end{lemma}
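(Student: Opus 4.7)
The plan is a straightforward dyadic decomposition. I would write
\[
\{n : t_n > b\} = \bigcup_{k\ge 0} \{n : 2^k b < t_n \le 2^{k+1} b\},
\]
a disjoint union, and then estimate each piece using the hypothesis applied at the scale $b_k = 2^k b$.

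More precisely, first I would note that since $t_n \le 1$ for all $n$, only finitely many values of $k$ actually contribute (those with $2^k b \le 1$), so there is no convergence issue. Then the assumption with $b$ replaced by $2^k b$ gives
\[
\#\bigl(\{n : 2^k b < t_n \le 2^{k+1} b\}\bigr) \le A (2^k b)^{-r} = A\, 2^{-kr} b^{-r}.
\]
Summing the geometric series in $k \ge 0$ yields
\[
\#(\{n : t_n > b\}) \le A b^{-r} \sum_{k\ge 0} 2^{-kr} = \frac{A}{1-2^{-r}}\, b^{-r},
\]
so $\sup_{b>0} b^r \#(\{n : t_n > b\}) \le A/(1-2^{-r}) < \infty$, which is exactly the $\ell^{r,\infty}$ condition.

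There is no real obstacle here; the only minor point to be careful about is that $r > 0$ so the geometric series converges, and that the hypothesis is uniform in $b$, which is what allows the summation over dyadic scales. This lemma will presumably be used in the proof of necessity in Theorem \ref{aeconv} to convert a counting bound at each dyadic scale (the form in which the negation of convergence naturally produces obstructions) into the $\ell^{r,\infty}$ membership that appears in condition (a).
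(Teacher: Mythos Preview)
Your proof is correct and is essentially identical to the paper's own proof: both use the same dyadic decomposition $\{n:t_n>b\}=\bigcup_{k\ge 0}\{n:2^k b<t_n\le 2^{k+1}b\}$, apply the hypothesis at scale $2^k b$, and sum the resulting geometric series. The only cosmetic difference is that you write out the constant $A/(1-2^{-r})$ explicitly, whereas the paper simply writes $\lesssim A$.
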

\begin{proof}
For every $\beta>0$,
\[ \beta^r \#(\{n: t_n>\beta\}) = \beta^r\sum_{k\ge 0} \#(\{n: 2^k\beta<t_n\le 2^{k+1} \beta\})\le 
\sum_{k\ge 0} A 2^{-kr} \lc A. \qedhere\]
\end{proof}

We now turn to the  proof of the necessity of the $\ell^{r,\infty}$-condition in Theorems \ref{aeconv} and Theorem \ref{globmax}. 


\begin{prop} \label{weakestprop}
Assume that $\{t_n\}$ is a decreasing sequence such that $t_n-t_{n+1}$ is also decreasing and  $\lim_{n\to\infty} t_n=0$. For $0<s<a/4$, let
$$r(s)=\frac{2s}{a-4s}.$$
(i) If $s<\min\{a/4,1/4\}$ and if 
\Be\label{weakest01}\meas(\{x\in [0,1] : \sup_n|S^af(x,t_n)|>1/2\})\leq C_\circ \|f\|_{H_s}^2
\Ee  holds for all $f\in H^s$, then $\{t_n\}\in \ell^{r(s),\infty}$.

\smallskip

(ii)  If $s<a/4$ and if the global weak type inequality
\Be\label{weakestglob}\meas(\{x\in \bbR : \sup_n|S^af(x,t_n)|>1/2\})\leq C_\circ \|f\|_{H_s}^2
\Ee holds for all $f\in H^s$, then $\{t_n\}\in \ell^{r(s),\infty}$.
\end{prop}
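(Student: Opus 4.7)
The plan is to prove the contrapositive. If $\{t_n\}$ fails to lie in $\ell^{r(s),\infty}$ then, by Lemma~\ref{elem}, for arbitrarily small $b>0$ and arbitrarily large $K$ one can find such $b$ with $N_b:=\#\{n:b<t_n\le 2b\}\ge K b^{-r(s)}$. The goal is to exhibit a test function $f\in H^s$ whose superlevel set $\{x:\sup_n|S^af(x,t_n)|>1/2\}$ has $\meas$ larger than $C_\circ\|f\|_{H^s}^2$, contradicting the assumed weak-type bound. For part~(i) this superlevel set should moreover lie in a translate of $[0,1]$; for (ii) this constraint is absent.

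The test function is a Knapp-type wave packet
\[
\widehat f(\xi)=\delta^{-1/2}\chi\bigl((\xi-\lambda)/\delta\bigr),
\]
where $\chi$ is a fixed smooth bump with $\widehat\chi(0)>0$. Taylor-expanding the phase $x\xi+t|\xi|^a$ around $\xi=\lambda$ yields, provided the quadratic term is controlled ($b\lambda^{a-2}\delta^2\lesssim 1$),
\[
|S^af(x,t_n)|\gtrsim \delta^{1/2}
\]
on an interval $I_n$ of length $\sim 1/\delta$ centered at $x_n=-at_n\lambda^{a-1}$, while $\|f\|_{H^s}^2\sim \lambda^{2s}$. The $N_b$ centers lie in an interval of length $L=ab\lambda^{a-1}$; by the convexity hypothesis the position gaps $x_{n+1}-x_n=a(t_n-t_{n+1})\lambda^{a-1}$ are themselves monotonically decreasing. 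I would take $\lambda\sim b^{-1/(a-4s)}$ and $\delta\sim \lambda^{1-2s}$, saturating the phase constraint; with this choice $\|f\|_{H^s}^2\sim b^{-r(s)}$, each $I_n$ has length $\sim b^{(1-2s)/(a-4s)}$, $L\sim b^{(1-4s)/(a-4s)}$, and the packing capacity $L\delta$ is comparable to $b^{-r(s)}$.

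Convexity enters at the geometric step: since the position gaps are monotonically decreasing, one extracts from the block a sub-collection on which every gap is $\ge 1/\delta$ (the loss in cardinality being controllable in terms of the excess $K$), and concludes that $\meas(\bigcup_n I_n)\gtrsim N_b/\delta$. Together with the weak-type inequality this forces $N_b\lesssim \lambda^{2s}\delta\sim b^{-r(s)}$, contradicting $N_b\ge K b^{-r(s)}$ once $K$ is sufficiently large. For part~(i), the extra requirement that all $x_n$ fit inside an interval of diameter $\le 1$ reads $L\lesssim 1$, i.e.\ $b^{(1-4s)/(a-4s)}\lesssim 1$, which is automatic precisely when $s<1/4$; this accounts for the extra hypothesis there. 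The main technical obstacle is reconciling the four constraints---amplitude ($\delta\gtrsim\lambda^{2s}$), phase ($b\lambda^{a-2}\delta^2\lesssim 1$), packing ($L\delta\gtrsim N_b$), and, for (i), confinement ($L\lesssim 1$)---at the critical scale $b^{*}\sim \lambda^{4s-a}$, i.e.\ the scale at which the Kenig--Ruiz interval bound $\lambda^{a/4}b^{1/4}$ and the $\ell^{2}$-counting term $b^{-r/2}$ underlying the proof of Lemma~\ref{gen} balance, and using the monotonicity of the position gaps to pass from pointwise lower bounds on $|S^af(\cdot,t_n)|$ to the required lower bound on the measure of the superlevel set.
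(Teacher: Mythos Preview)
Your overall strategy—Dahlberg--Kenig wave packets tested against the weak-type inequality—is the right one, and the scales $\lambda\sim b^{-1/(a-4s)}$, $\delta\sim\lambda^{1-2s}$ correctly identify the critical balance. But the geometric step breaks down precisely at this critical scale. You compute yourself that the packing capacity is $L\delta\sim b^{-r(s)}$; since all $N_b$ centers $x_n$ lie in an interval of length $L$, the union $\bigcup_n I_n$ has measure at most $L+O(1/\delta)$, \emph{independently of $N_b$}. Any $1/\delta$-separated subcollection therefore has at most $\sim L\delta\sim b^{-r(s)}$ members, so the excess factor $K$ in $N_b\ge Kb^{-r(s)}$ is lost entirely in the extraction; the claim $\meas(\bigcup_n I_n)\gtrsim N_b/\delta$ is simply false when $K$ is large. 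What survives is $\meas(\bigcup_n I_n)\lesssim L\sim b^{(1-4s)/(a-4s)}$, which matches the weak-type right side $\delta^{-1}\lambda^{2s}$ on the nose, and no contradiction results. (There is also a bookkeeping slip: $\lambda^{2s}\delta=\lambda\neq b^{-r(s)}$ in general.)

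The paper spends the excess $K$ differently. Convexity is used not merely to say that the position gaps are monotone, but quantitatively to bound the gaps \emph{below} the block $(b,2b]$: for every $t_n\le b$ one has
\[
t_n-t_{n+1}\le \min_{t_m>b}(t_m-t_{m+1})\le \frac{2b}{N_b}\le 2M^{-1}b^{1+r(s)}
\]
for any $M\le K$ with $M\to\infty$. This extra smallness permits the off-critical perturbation $\lambda= M^{2/a}b^{-1/(a-4s)}$, $\rho=\varepsilon b^{-1/2}\lambda^{1-a/2}$ (still respecting the quadratic phase constraint), under which the good intervals associated with the $t_n\le b/2$ now \emph{cover} the whole interval $I=[0,\tfrac{a}{2}\lambda^{a-1}b]$. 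One gets $\meas(I)\sim M^{2(a-1)/a}b^{(1-4s)/(a-4s)}$ while $\|f\|_{H^s}^2\sim \lambda^{2s}\rho^{-1}\sim M^{(a-2+4s)/a}b^{(1-4s)/(a-4s)}$, and the ratio $M^{(a-4s)/a}\to\infty$ gives the contradiction. In short: the role of convexity is to propagate the density information from the block downward, buying an extra parameter $M$ that breaks the critical balance; your attempt to extract disjointness within the block discards exactly this leverage.
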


\begin{proof}
We argue by contradiction and assume that $\{t_n\}\notin \ell^{r(s),\infty}$ 
while \eqref{weakest01} holds if $s<\min\{a/4,1/4\}$ or \eqref{weakestglob} holds in the case $a>1$ and $1/4\le s<a/4$.
By Lemma \ref{elem}, 
this means
\[\sup_{0<b<1/2} b^{r(s)} \#(\{n: b<t_n\le 2b\} ) =\infty.\] Hence
there exists an increasing sequence $\{R_j\}$ with $\lim_{j\to \infty} R_j=\infty$ and a sequence of positive numbers $b_j$ with $\lim_{j\to\infty} b_j= 0$  so that
\Be\label{counterpos}
  \#(\{n: b_j<t_n\le 2b_j\} ) \ge R_j b_j^{-r(s)}.
 \Ee
We take  another sequence 
 $$M_j\le R_j \text{ with } \lim_{j\to\infty} M_j=\infty$$
 such that in the case where $s<1/4$
 \Be \label{conditionMj}
a M_j^{\frac{2(a-1)}{a}} b_j^{\frac{1-4s}{a-4s}}\le 1. 
 \Ee  
 In the case $1/4\leq s<a/4$ we simply take $M_j=R_j$.

 We now show that 
 \Be\label{uppertndiff}
 t_n-t_{n+1} \le 2M_j^{-1} b_j^{\frac{a-2s}{a-4s}},
 \quad \text{ if } ~ t_n\le b_j.
 \Ee
 Indeed since $n\mapsto t_n-t_{n+1} $ is decreasing we get,  for $t_n\le b_j$,
 \begin{align*}
 t_n-t_{n+1}\le \min\{ t_m-t_{m+1}: t_m > b_j\}
 &\le \frac{2b_j} {\#(\{n: b_j< t_n\le 2b_j\})} \\
 &\le \frac{2b_j}{R_j b_j^{-r(s)}}
  \le \frac{2b_j}{M_j b_j^{-r(s)}},
 \end{align*}
 by \eqref{counterpos}, and  \eqref{uppertndiff} follows since $r(s)+1=\frac{a-2s}{a-4s}$.

For our construction of a counterexample we rely on the idea originally proposed by
Dahlberg and Kenig  \cite{DK}. We introduce  a family of Schwartz functions which is used to test \eqref{weakest01}.
Choose a $C^\infty$ function $g$  with compact support in $[-1/2,1/2]$ such that
$g(\xi)\geq 0$ and $\int g(\xi)\,d\xi=1$ and consider  a family of functions 
$f_{\la,\rho}$, with large $\la$ and $\rho\ll \la$,   defined via the Fourier transform by 
$$\widehat f_{\la,\rho}(\eta)= \rho^{-1} g((\eta+\la)/\rho).$$
Thus 
$\widehat f_{\la,\rho}$ is supported in an
interval of length $\rho\ll \la$ contained in $[-2\la,-\la/2]$. 
 The assumption $\rho\ll \la$ clearly implies \Be\label{Hsupperbd} \|f_{\la,\rho}\|_{H^s} \lss \la^s \rho^{-1/2}.\Ee

We now examine the action of $S^a$ on $f_{\la,\rho}$. We have 
\begin{align*}
|S^af_{\la,\rho}(x,t_n)|= \Big|\int e^{i (x\eta+t_n|\eta|^a)}\rho^{-1} g((\eta+\la)/\rho)\,\tfrac{d\eta}{2\pi}\Big|
=
\Big|\int e^{i\Phi_{\la,\rho}(\xi;x,t_n)} g(\xi)\,\tfrac{d\xi}{2\pi}\Big|
\end{align*}
where $$\Phi_{\la,\rho}(\xi;x,t_n)=x(\rho\xi-\la)+t_n(\la-\rho\xi)^a.$$
We shall use, for $x$ in a suitable interval $I_j\subset I$, and for suitable choices of $\la_j,\rho_j$ and $n(x,j)$, 
the estimate 
\begin{align} \label{Saptwlowerbd}|S^af_{\la_j,\rho_j}(x,t_{n(x,j)})|&\geq \int g(\xi)\,d\xi-\int|e^{i\Phi_{\la_j,\rho_j}(\xi;x,t_{n(x,j)})}-1|\, g(\xi)\,\tfrac{d\xi}{2\pi}
\notag
\\&\ge 
1 - \max_{|\xi| \le 1/2} \big|e^{i\Phi_{\la_j,\rho_j}(\xi;x,t_{n(x,j)})}-1\big|
\end{align}
and  we will have to show  that the subtracted  term is small
for our choices of $x$, $n(x,j)$ and $(\la_j,\rho_j) $.

By a standard Taylor expansion, we see that $$(1-\rho\xi/\la)^a=1-a\rho\xi/\la+\tfrac{a(a-1)}{2}(\rho\xi/\la)^2+E_3(\rho\xi/\la)$$
where $E_3(t)=-\frac{1}{2} a(a-1)(a-2) (\int_0^1(1-st)^{a-3}(1-s)^2 ds)\, t^3$. Hence 
\begin{multline}\label{Phiestimate}\Phi_{\la,\rho}(\xi;x,t_n)=\\(x-a\la^{a-1}t_n)\rho\xi+\tfrac{a(a-1)}{2}\rho^2\la^{a-2}t_n\xi^2+\la^a t_n E_3(\rho\xi/\la)+\la^at_n-\la x.\end{multline}
Since terms that are independent of $\xi$ do not affect the absolute value of our integral, we only need to show an upper bound of the first three terms.
We consider $t_n$ with $t_n\le b_j/2$ and let $\eps$ be such that $\eps<10^{-1}(a+2)^{-1}$.
We chose $(\la,\rho)=(\la_j,\rho_j)$ as
\Be\label{lajrhoj}
\la_j = M_j^{2/a}b_j^{-\frac{1}{a-4s}}, 
\qquad \rho_j=\eps b_j^{-1/2} \la_j^{1-a/2} = 
\eps M_j^{\frac{2-a}{a}} b_j^{-\frac{1-2s}{a-4s}}
 \Ee
 and we consider these choices for large $j$ when $b_j\ll 1$ and $M_j\gg 1$. We then get
  \[\rho_j/\la_j=\eps M_j^{-1}b_j^{\frac{2s}{a-4s}}\le \eps;\]
 hence 
  for $|\xi|\le 1/2$
\begin{subequations} 
\begin{align} \label{secondterm}
 &|\tfrac{a(a-1)}{2} \rho_j^2 \la_j^{a-2} t_n \xi^2 |
 \le \tfrac{(a+1)^2}{2} \rho_j^2 \la_j^{a-2} b_j \xi^2 
 \\&\le (a+1)^2\eps^2 M_j^{\frac{4-2a}{a}}b_j^{-\frac{2-4s}{a-4s}} M_j^{\frac{2(a-2)}{a}}b_j^{-\frac{a-2}{a-4s}}b_j= (a+1)^2\eps^2
\notag\end{align}
and similarly 
\Be
\label{thirdterm}
|\la_j^a t_n E_3 (\rho_j\xi/\la)| \le (a+2)^3 \la_j^ab_j\big( \tfrac{\rho_j}{2\la_j}\big)^3
\le (a+2)^3 \eps \la_j^{a-2}\rho_j^2 b_j
\le (a+2)^3\eps^3.
\Ee
Next we consider $x$ in the interval $$I_j:=[0,a\la_j^{a-1} b_j/2].$$ 
Notice that in the case  $s<1/4$,
\[a\la_j^{a-1} b_j/2= \frac a2 M_j^{\frac{2(a-1)}{a}}b_j^{\frac{1-4s}{a-4s}}\le 1/2,\] by \eqref{conditionMj} and hence $I_j\subset [0,1/2]$ in this case. If $a>1$ and $1/4\le s<a/4$, no restriction on $I_j$ is required (as we are trying to disprove the global inequality \eqref{weakestglob}
in this case).
Each $x\in I_j$ is contained in an interval $(a\la_j^{a-1}t_{n+1},a\la_j^{a-1} t_n]$ for a unique $n$, which we label $n(x,j)$. By \eqref{uppertndiff} we have that
$$0\le t_{n(x,j)}-t_{n(x,j)+1}\le 2M_j^{-1}b_j^{\frac{a-2s}{a-4s}}.$$
Hence
\begin{align}
\notag |(x-a\la_j^{a-1}t_{n(x,j)})\rho_j\xi|\le a\la_j^{a-1}\rho_j (t_{n(x,j)}-t_{n(x,j)+1})&
\\\label{firstterm}\le a M_j^{\frac{2(a-1)}{a}} b_j^{-\frac{a-1}{a-4s}} \eps M_j^{\frac{2-a}{a}} b_j^{-\frac{1-2s}{a-4s}} 2M_j^{-1} b_j^{\frac{a-2s}{a-4s}} &=2a\eps.
\end{align}
\end{subequations}
As $\eps\le 10^{-1} (a+2)^{-1}$ we obtain from \eqref{secondterm}, \eqref{thirdterm} and \eqref{firstterm}
\[\max_{|\xi| \le 1/2} |e^{i\Phi_{\la_j,\rho_j}(\xi;x,t_{n(x,j)})}-1| \le 1/2\] and thus 
from \eqref{Saptwlowerbd}
\Be \sup_n|S^af_{\lambda_j,\rho_j}(x,t_n)| \ge |S^a f_{\lambda_j,\rho_j}(x, t_{n(x,j)})|\ge \tfrac 12, \text{ for } x\in I_j=
[0,a\la_j^{a-1} b_j/2],
\Ee
and, as noted before,  $I_j\subset [0,1]$ if $s<1/4$.
The assumption of  \eqref{weakest01} 
(in the case $s<\min\{a/4,1/4\}$) or the assumption of
\eqref{weakestglob},  both yield
\Be \meas(I_j) \le 4C_\circ \|f_{\la_j,\rho_j}\|_{H^s}^2= \widetilde C \la_j^{2s}\rho_j^{-1}.
\Ee 
This leads to
\[aM_j^{\frac{2(a-1)}{a}}b_j^{\frac {1-4s}{a-4s}} \le \widetilde C 
M_j^{\frac {4s}{a}}b_j^{-\frac{2s}{a-4s}} \eps^{-1}M_j^{\frac{a-2}{a}} b_j^{\frac{1-2s}{a-4s}}
\]
and hence to
\[a\eps \widetilde C^{-1} \le M_j^{-\frac{a-4s}{a}}.\]
Since $\lim_{j\to\infty} M_j=\infty$ the  right hand side converges to $0$ as $j\to \infty$ and we obtain a contradiction.
This means that if  $\{t_n\}\notin \ell^{\frac{2s}{a-4s},\infty}$
then
 \eqref{weakest01}  (and therefore \eqref{weakestglob}) cannot hold with  $s<\min\{a/4,1/4\}$  
 and 
\eqref{weakestglob} cannot hold with  $1/4\le s<a/4$.
Thus both parts of the  proposition are  proved.
\end{proof}

We are now able to combine previous results to give a proof of the theorems in the introduction.

\begin{proof}[Proof of Theorem \ref{aeconv}]
The implications (a)$\implies$(b) and (a)$\implies$(d)  follow from Proposition \ref{endpt}.
The implication (b)$\implies$(c) is immediate by Tshebyshev's inequality.
The implication (c)$\implies$(a)   follows from 
part (i)  of Proposition \ref{weakestprop}. Finally, the implication (d)$\implies$(c) follows from Proposition \ref{aeconvnec}.
\end{proof}

\begin{proof}[Proof of Theorem  \ref{globmax}]
The implication (a)$\implies$(b) follows from Proposition \ref{endpt}.
The implication (b)$\implies$(c) is again immediate by Tshebyshev's inequality.
The implication (c)$\implies$(a)   follows from 
part (ii)  of Proposition \ref{weakestprop}.
\end{proof}

\section{The case $a=1$}\label{a=1sect} We now give the sketch of the proof of Theorem \ref{aeconva=1}.
We start with an analog to Lemma \ref{gen}, for the frequency-localized operator $S^1_\la$.

\begin{lemma}\label{gena=1} 
(i) Let $b>\la^{-1}$.
Then \[\big\|\sup_{0\le t\le b} |S_\la^1 f(\cdot,t)|\big\|_2
\lc (\la b)^{1/2} \|f\|_2.\]
(ii) 
Let $0<r<\infty$ and let $\{t_n\}$ be a sequence in $[0,1]$ which belongs to $\ell^{r,\infty}$.
 Then for $\la>1$
\[ \Big\|\sup_{n}|S_\la^1 f(\cdot,t_n)|
\Big\|_{L^2(\R)}\leq C
\la^{\frac{r}{2+2r}}
\|f\|_{L^2(\bbR)}.\] 
\end{lemma}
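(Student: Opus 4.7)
My plan for (i) is to exploit the fact that $S^1$ acts as a translation on functions whose Fourier transform lies in a single half-line. Splitting the cutoff $\chi(\xi/\la)=\chi^+(\xi/\la)+\chi^-(\xi/\la)$ with $\chi^\pm$ supported in $\pm[1/2,1]$, I would write
\begin{align*}
S^1_\la f(x,t)=g_+(x+t)+g_-(x-t),\qquad g_\pm(y)=\int e^{iy\xi}\chi^\pm(\xi/\la)\widehat f(\xi)\,\tfrac{d\xi}{2\pi}.
\end{align*}
Each $g_\pm$ has Fourier support in $\pm[\la/2,\la]$, so $\|g_\pm\|_2\le\|f\|_2$ and the Bernstein/Plancherel inequality gives $\|g_\pm'\|_2\le\la\|g_\pm\|_2$. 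It then suffices to estimate $\|\sup_{0\le t\le b}|g(\cdot+t)|\|_2$ for $g=g_\pm$. I would use the pointwise identity $\partial_s|g(x+s)|^2=2\Re[\,\overline{g(x+s)}g'(x+s)\,]$, which yields
\begin{align*}
\sup_{0\le t\le b}|g(x+t)|^2\le|g(x)|^2+2\int_0^b|g(x+s)||g'(x+s)|\,ds.
\end{align*}
Integrating in $x$, applying Cauchy--Schwarz in $x$ (for each fixed $s$), and using Bernstein, I obtain
\begin{align*}
\|\sup_{0\le t\le b}|g(\cdot+t)|\|_2^2\le\|g\|_2^2+2b\|g'\|_2\|g\|_2\lc(1+\la b)\|g\|_2^2,
\end{align*}
which under the assumption $b>\la^{-1}$ is $\lc\la b\|f\|_2^2$. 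Summing the two pieces gives (i).

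For (ii), I would follow the pattern of Lemma \ref{gen}. Splitting according to whether $t_n\le b$ or $t_n>b$,
\begin{align*}
\|\sup_n|S^1_\la f(\cdot,t_n)|\|_2\le\Big\|\sup_{n:t_n\le b}|S^1_\la f(\cdot,t_n)|\Big\|_2+\Big\|\sup_{n:t_n>b}|S^1_\la f(\cdot,t_n)|\Big\|_2.
\end{align*}
The first term is bounded by $(\la b)^{1/2}\|f\|_2$ via part (i) (provided $b>\la^{-1}$); for the second, replacing the supremum by the square-sum and using Plancherel together with the $\ell^{r,\infty}$ hypothesis gives
\begin{align*}
\Big\|\sup_{n:t_n>b}|S^1_\la f(\cdot,t_n)|\Big\|_2\le(\#\{n:t_n>b\})^{1/2}\|f\|_2\lc b^{-r/2}\|f\|_2.
\end{align*}
Equating the two terms, $(\la b)^{1/2}=b^{-r/2}$, dictates $b=\la^{-1/(1+r)}$, which for $r>0$ and $\la>1$ automatically satisfies the hypothesis $b>\la^{-1}$ of (i); the common bound becomes $\la^{r/(2+2r)}\|f\|_2$, as claimed.

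I do not expect a serious obstacle here: in contrast with the case $a\neq 1$, one avoids van der Corput and $TT^*$ entirely, since $S^1$ reduces to half-line translations. The only points requiring care are (a) verifying that the $b>\la^{-1}$ hypothesis of (i) meshes with the optimal $b$ in (ii), which it does, and (b) ensuring the splitting into $g_\pm$ does not cost a factor of $\la$ -- it does not, as Bernstein applies on each dyadic frequency shell with constant of order $\la$.
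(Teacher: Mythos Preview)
Your proof is correct and follows essentially the same approach as the paper. For (i) the paper also uses the $L^2$ bound on $\partial_t S_\la^1 f$ coming from the frequency localization, but implements it by tiling $[0,b]$ into $O(\la b)$ intervals of length $\la^{-1}$ and taking $\ell^2$ over the tiles, rather than your single Sobolev-in-$t$ estimate exploiting the translation structure; part (ii) is identical in both.
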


\smallskip

\begin{proof} We use the elementary inequality
\begin{align} \big\| \sup_{c\le t\le c+\la^{-1}} |S_\la^1 f(x,t)|\big\|_2\lc \|f\|_2
\end{align}
which just follows from $L^2$ estimates for  $S_\la f(\cdot,t)$  and
$\partial_t S_\la f(\cdot,t)$.
Now 
\begin{align*}\Big\|\sup_{0<t\le b}|S_{\la}^1f(\cdot,t)|\Big\|_{L^2(\R)}&
\le \Big(\sum_{\substack{m\ge 0:\\ 0\le m\la^{-1}\le b }}\big\|\sup_{m\la^{-1}\le t\le (m+1)\la^{-1}}| S_{\la}^1f(\cdot,t)|\big\|_{2}^2\Big)^{1/2}
\end{align*}
which is bounded by a constant times 
$(\la b)^{1/2} \|f\|_2$.

To prove part (ii) we write as  in the proof of Lemma \ref{gen},  for $b>\la^{-1}$ to be determined, $$\Big\|\sup_{n}|S_{\la}^1f(\cdot,t_n)|\Big\|_{2}\leq\Big\|\sup_{n:\, t_n\leq b}|S_{\la}^1f(\cdot,t_n)|\Big\|_{2}+\Big\|\sup_{n:\, t_n>b}|S_{\la}^1f(\cdot,t_n)|\Big\|_{2}.$$
For the first term we have
\begin{align*}\Big\|\sup_{n:\, t_n\le b}|S_{\la}^1f(\cdot,t_n)|\Big\|_{L^2(\R)} \lc 
(\la b)^{1/2} \|f\|_2,
\end{align*}
by part (i).
For  the second term we may estimate as in Lemma \ref{gen}
\begin{align*}\Big\|\sup_{n:\, t_n>b}|S_{\la}^1f(\cdot,t_n)|\Big\|_{L^2(\R)}
\lss b^{-r/2}\|f\|_2.
\end{align*}
Choosing $b$ such that $(\la b)^{1/2}= b^{-r}$ yields the claimed result.
\end{proof}

\begin{prop}\label{endpta=1} 
Let $0<r<\infty$ and assume that $\{t_n\}\in \ell^{r,\infty}(\bbN)$ is decreasing.
 Then 
\Be\label{strongtypemaxresulta=1}
\big\|\sup_{n}|S^1f(\cdot,t_n)|\big\|_{L^2(\R)}\leq C\|f\|_{H^s}, \quad s= \frac{r}{2+2r}.\Ee
\end{prop}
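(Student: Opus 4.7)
The plan is to mimic the proof of Proposition \ref{endpt}, but with Lemma \ref{gena=1} playing the role of Proposition \ref{smint}. The natural scaling is now $b = \lambda^{-1/(1+r)}$ (the optimum in the proof of Lemma \ref{gena=1}(ii)), which, for the Littlewood--Paley frequency scale $\lambda = 2^l$, suggests bucketing the sequence as
\[
\fN_l=\big\{n\in\bbN:\ 2^{-(l+1)/(1+r)} < t_n \le 2^{-l/(1+r)}\big\}.
\]
The hypothesis $\{t_n\}\in\ell^{r,\infty}$, together with the identity $2s = r/(1+r)$, then yields $\#(\fN_l)\lc 2^{2ls}$. With the same projections $P_k$ as in \S\ref{suffsect}, I would split
\[
\sup_n|S^1f(x,t_n)| \le \cE_1(x)+\cE_2(x)+\cE_3(x),
\]
with $\cE_j$ defined exactly as in the proof of Proposition \ref{endpt} but with the threshold $l/(1+2r)$ replaced by $l/(1+r)$.

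For $\cE_3$, which covers $k\ge l$, I would not need any maximal bound: a direct Plancherel estimate combined with $\#(\fN_l)\lc 2^{2ls}$ gives
\[
\|\cE_3\|_2 \lc \sum_{m\ge 0} \Big(\sum_{l\ge 0} 2^{2ls}\|P_{l+m}f\|_2^2\Big)^{1/2}
 = \sum_{m\ge 0} 2^{-ms}\Big(\sum_{l\ge 0} 2^{2(l+m)s}\|P_{l+m}f\|_2^2\Big)^{1/2}\lc \|f\|_{H^s}.
\]
For $\cE_1$, when $k < l/(1+r)$ the $t_n$ with $n\in\fN_l$ all lie in an interval of length $\lc 2^{-k}=\lambda^{-1}$, so Lemma \ref{gena=1}(i) (used at the boundary $b=\lambda^{-1}$) gives only the trivial bound $\|P_kf\|_2$; summing in $k$ and using $\sum_{k\ge 0}\|P_kf\|_2 \le C(s)\|f\|_{H^s}$ for $s>0$ by Cauchy--Schwarz then closes this piece.

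The decisive term is $\cE_2$, corresponding to $l/(1+r)\le k < l$. Writing $k=l-j$, the $t_n$ with $n\in\fN_l$ lie in $J_l=[0,2^{-l/(1+r)}]$, and one has $b\lambda = 2^{l-j-l/(1+r)} = 2^{2ls - j}\ge 1$ precisely when $l\ge j(1+r)/r$. For such $(l,j)$, Lemma \ref{gena=1}(i) yields
\[
\Big\|\sup_{n\in\fN_l}|S^1 P_{l-j}f(\cdot,t_n)|\Big\|_2 \lc (b\lambda)^{1/2}\|P_{l-j}f\|_2 = 2^{(l-j)s}\,2^{-j/(2(1+r))}\|P_{l-j}f\|_2,
\]
using the key identity $s-\tfrac{1}{2}=-\tfrac{1}{2(1+r)}$. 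Then
\[
\|\cE_2\|_2 \le \sum_{j\ge 0}\Big(\sum_{l\ge j(1+r)/r}\Big\|\sup_{n\in\fN_l}|S^1 P_{l-j}f(\cdot,t_n)|\Big\|_2^2\Big)^{1/2}
 \lc \sum_{j\ge 0} 2^{-j/(2(1+r))}\,\|f\|_{H^s},
\]
which is summable. The main obstacle is therefore purely bookkeeping: verifying that the threshold $l/(1+r)$ exactly matches the scaling of Lemma \ref{gena=1}(i) so that the exponent of the extra factor $2^{-j/(2(1+r))}$ comes out negative, giving a geometric series in $j$; everything else is a direct translation of the $a\ne 1$ argument.
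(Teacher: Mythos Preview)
Your argument is correct and follows essentially the same strategy as the paper's: the same bucket sets $\fN_l$, the same Littlewood--Paley split, and the same use of Lemma~\ref{gena=1} in place of Proposition~\ref{smint}. Your three-term decomposition, carried over verbatim from Proposition~\ref{endpt} with the threshold $l/(1+2r)$ replaced by $l/(1+r)$, is in fact slightly more careful than the paper's own two-term version, since your separate $\cE_1$ piece explicitly handles the regime $k<l/(1+r)$ where $b\lambda\le 1$ and Lemma~\ref{gena=1}(i) only gives the trivial bound $\|P_kf\|_2$; the paper folds this range into a single $\cE_1$ and writes the $(\lambda b)^{1/2}$ bound uniformly over $l\ge j$.
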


\smallskip

\begin{proof} 


We set for  $l\ge 0$ we set 
\[\fN(l)=\big\{n\in \bbN: 2^{-(l+1)\frac{1}{1+r}}
<t_n\le 
2^{-l\frac{1}{1+r}}\big\}.\]
By assumption $\{t_n\}\in \ell^{r,\infty} $ there is $C>0$ so that
\Be \label{Nlcard1}\#(\fN(l))\le C2^{l\frac{r}{1+r}}= C 2^{2ls}.\Ee

Arguing as in the proof of Proposition \ref{endpta=1} we can estimate
$\|\sup_n|S^af(\cdot,t_n)|\|_{2} \le \|\cE_1\|_2+\|\cE_2\|_2$ where 
\begin{align*}
\cE_1(x)&=\sum_{j\geq0}\Big(\sum_{l\geq j}\sup_{n\in\fN(l)}|S^1P_{l-j} f(x,t_n)|^2\Big)^{1/2}
\\
\cE_2(x)&=\sum_{m\geq0}\Big(\sum_{l\geq0}\sup_{n\in\fN(l)}|S^1 P_{l+m}f(x,t_n)|^2\Big)^{1/2}.
\end{align*}
Again as in the proof of Proposition \ref{endpt}
\begin{align*}
\|\cE_2\|_2 &\le \sum_{m\geq0}\Big(\sum_{l\geq0}\#(\fN(l)) \big\|P_{l+m}f\big\|_{2}^2\Big)^{1/2}
\lc \sum_{m\geq0}\big(\sum_{l\geq0}2^{2sl}\| P_{l+m}f\|_2^2\big)^{1/2}\\&=\sum_{m\geq0}2^{-ms}\big(\sum_{l\geq0}2^{2s(l+m)}\| P_{l+m}f\|_2^2\big)^{1/2}\lss \|f\|_{H^s}.
\end{align*}


In order to deal with the first sum, we use that $\fN(l)\subset [0, b_l]$ with $b_l=2^{-l/(1+r)}$. Hence by 
Lemma \ref{gena=1}
\begin{align*}
\|\cE_1\|_2&\le \sum_{j\geq0}\Big(\sum_{l\geq j}\Big\|\sup_{n\in\fN(l)}|S^1P_{l-j}f(\cdot,t_n)|\Big\|_{L^2(\R)}^2\Big)^{1/2}\\
&\lss\sum_{j\geq0}\Big(\sum_{l\geq j}\big[2^{\frac{l-j}{2}}\,2^{-l\frac{1}{2+2r}} 
\| P_{l-j}f\|_2\big]^2\Big)^{1/2}\\
&\lc\sum_{j\geq0}
2^{-j \frac{1}{2+2r}}
\Big(\sum_{l\geq j}\big[2^{(l-j) \frac {r}{2+2r}}\,
\| P_{l-j}f\|_2\big]^2\Big)^{1/2}
\lc \|f\|_{H^s} 
\end{align*} 
with $s=\frac{r}{2r+2}$.
\end{proof}

For completeness we state the 
 case $s=1/2$, $a=1$  analog of  Proposition \ref{besovprop} which is sharp in this case. 

\begin{prop} \label{besovpropa=1} For all $f\in B^{1/2}_{2,1}(\bbR)$,
$$\big\|\sup_{t\in [0,1]}|S^1 f(\cdot,t)|\big\|_{L^2(\R)}\leq C \|f\|_{B^{1/2}_{2,1}}.$$ 
The space $B^{1/2}_{2,1}$ cannot be replaced by $B^{1/2}_{2,\nu}$ for any $\nu>1$.
\end{prop}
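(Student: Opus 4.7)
The plan for the positive direction is to mirror the proof of Proposition~\ref{besovprop}, with Lemma~\ref{gena=1}(i) playing the role of Proposition~\ref{smint}. Decomposing $f=\sum_{k\ge 0}P_kf$ and applying Lemma~\ref{gena=1}(i) with $b=1$, $\lambda=2^k$ for $k\ge 1$ yields $\|\sup_{t\in[0,1]}|S^1P_kf|\|_2\lesssim 2^{k/2}\|P_kf\|_2$ (the term $k=0$ is trivial by $L^2$-boundedness of $S^1$ and $\partial_t S^1$ on frequencies $\le 1/2$). Summing by Minkowski's inequality and invoking Plancherel recognizes the bound as a constant multiple of $\|f\|_{B^{1/2}_{2,1}}$.

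For the sharpness claim, given $\nu>1$ the plan is to exhibit a sequence of Schwartz functions $\{f_N\}$ whose $B^{1/2}_{2,\nu}$-norms stay uniformly bounded while their maximal norms diverge. Fix a real-valued Schwartz $\phi$ with $\phi(0)=1$ and $\widehat\phi$ supported in a small neighborhood of the origin, and set $g_k(x)=\phi(2^kx)e^{i2^kx}$, so that $\widehat{g_k}$ sits in a small dyadic band around $2^k$ lying inside $[2^{k-1},2^{k+1}]$. Pick an exponent $\alpha\in(1/\nu,1)$, set $A_k=k^{-\alpha}$, and define
\[ f_N(x)=\sum_{k=1}^N A_k\,g_k(x). \]

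Two estimates drive the contradiction. First, since $\widehat{g_k}$ meets only the spectral windows of $P_k$ and $P_{k+1}$ and $\|g_k\|_2\lesssim 2^{-k/2}$, one gets $2^{j/2}\|P_jf_N\|_2\lesssim A_{j-1}+A_j\lesssim A_j$, so that
\[ \|f_N\|_{B^{1/2}_{2,\nu}}\lesssim\Big(\sum_{j\ge 1}A_j^\nu\Big)^{1/\nu}<\infty \]
uniformly in $N$, because $\alpha\nu>1$. Second, $\widehat{g_k}\subset(0,\infty)$ implies the translation identity $S^1g_k(x,t)=g_k(x+t)$; for $x\in[-1,0]$ the choice $t=-x\in[0,1]$ sends every summand simultaneously to its real peak, giving
\[ \sup_{t\in[0,1]}|S^1f_N(x,t)|\ge|S^1f_N(x,-x)|=\phi(0)\sum_{k=1}^N A_k\gtrsim N^{1-\alpha}. \]
Taking $L^2$-norms over $[-1,0]$ yields $\|\sup_{t\in[0,1]}|S^1f_N|\|_{L^2(\bbR)}\gtrsim N^{1-\alpha}\to\infty$, which is incompatible with any bound of the maximal norm by $\|f_N\|_{B^{1/2}_{2,\nu}}$.

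The main difficulty is arranging for the contributions from different frequency scales to combine \emph{linearly} rather than in $\ell^2$. A naive ansatz, taking the $f_k$ to be spatial translates of a single frequency-block bump into disjoint regions, only yields $\|\sup_t|S^1f|\|_2^2\gtrsim\sum_k c_k^2$, which is already permitted by $H^{1/2}=B^{1/2}_{2,2}$ and therefore cannot exclude $B^{1/2}_{2,\nu}$ for any $\nu\le 2$. The modulated bumps $g_k$ are designed so that their peaks coincide at the origin with a common real phase there, producing a genuinely additive lower bound $\sum_k A_k$ that persists on the full interval $[-1,0]$.
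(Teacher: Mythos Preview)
Your proof is correct. The positive direction is identical to the paper's argument. For the sharpness assertion, the paper proceeds more abstractly: it quotes the known fact (from Bergh--L\"ofstr\"om) that for $\nu>1$ the space $B^{1/2}_{2,\nu}$ contains unbounded functions whose Fourier transform is supported in a half-line, observes that for such $f$ one has $S^1f(x,t)=f(x-t)$, and concludes that $\sup_{t\in[0,1]}|S^1f(x,t)|=\infty$ on a set of positive measure. Your sequence $f_N=\sum_{k=1}^N k^{-\alpha}\phi(2^k\cdot)e^{i2^k\cdot}$ with $\alpha\in(1/\nu,1)$ is precisely the standard construction of such an unbounded function (its formal limit $\sum_{k\ge1}k^{-\alpha}g_k$ lies in $B^{1/2}_{2,\nu}$, has spectrum in $(0,\infty)$, and blows up at the origin). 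So the two arguments are the same idea; yours is self-contained and quantitative, while the paper's is a two-line citation. Your final paragraph explaining why spatially disjoint blocks would not suffice is a nice observation, though not strictly needed.
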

\begin{proof} The first part is immediate from Lemma \ref{gena=1}.
For the second part one recalls that there are unbounded functions 
in $B^{1/2}_{2,\nu}$ whose Fourier transform is supported in $(-\infty,0]$, \cf. \cite{BL}.  For such functions $S^1f(x,t)=f(x-t)$ and  thus, for $\nu>1$  one can easily find $f\in B^{1/2}_{2,\nu}$ such that
$\sup_{t\in [0,1]} |S^1f(x,t)|=\infty$ on a set $A\subset [0,1]$  with $\meas(A)>0$.
\end{proof}

\begin{prop} \label{weakestpropa=1}
Assume that $\{t_n\}$ is a decreasing sequence such that $t_n-t_{n+1}$ is also decreasing and  $\lim_{n\to\infty} t_n=0$. For $s<1/2$ let
$$\rho(s)=\frac{2s}{1-2s}.$$
Then the validity of the inequality  
\Be\label{weakest2}\meas(\{x\in [0,1] : \sup_n|S^1f(x,t_n)|>1/2\})\leq C_\circ \|f\|_{H_s}^2.
\Ee  for all $f\in H^s$, implies that $\{t_n\}\in \ell^{\rho(s),\infty}$.\end{prop}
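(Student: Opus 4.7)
The plan is to mirror the Dahlberg--Kenig style contradiction argument of Proposition~\ref{weakestprop}, exploiting the key simplification that for $a=1$ the operator $S^1$ acts as a pure translation on functions whose Fourier transform lies in a half-line. Assume, for contradiction, that $\{t_n\}\notin \ell^{\rho(s),\infty}$. Lemma~\ref{elem} then supplies sequences $R_j\to\infty$ and $b_j\to 0$ with $\#\{n:b_j<t_n\le 2b_j\}\ge R_j\, b_j^{-\rho(s)}$, and since $\rho(s)+1=1/(1-2s)$ the convexity hypothesis forces, exactly as in the derivation of~\eqref{uppertndiff},
\[ t_n-t_{n+1}\le 2R_j^{-1}b_j^{1/(1-2s)}\qquad\text{for every }n\text{ with }t_n\le b_j. \]

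For the test functions I take $\widehat{f_{\la,\rho}}(\eta)=\rho^{-1}g((\eta+\la)/\rho)$ with $g$ normalized so that $(2\pi)^{-1}\int g=1$, exactly as in the proof of Proposition~\ref{weakestprop}, but now require $\la>\rho$ so that $\spp\widehat{f_{\la,\rho}}\subset(-\infty,0)$. Then $|\eta|=-\eta$ on that support, the phase $x\eta+t|\eta|=(x-t)\eta$ is linear in $\eta$, and one has the exact identity $S^1 f_{\la,\rho}(x,t)=f_{\la,\rho}(x-t)$. Since $f_{\la,\rho}(y)=e^{-i\la y}\,(2\pi)^{-1}\!\int g(\xi)e^{i\rho y\xi}\,d\xi$ has $|f_{\la,\rho}(0)|=1$, by continuity there is a fixed $c_0>0$ with $|f_{\la,\rho}(y)|\ge 1/2$ whenever $|y|\le c_0/\rho$. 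The crucial balance is then to set
\[ \rho_j := c_0 R_j\, b_j^{-1/(1-2s)},\qquad \la_j := 2\rho_j, \]
so the displayed spacing bound becomes $t_n-t_{n+1}\le 2c_0/\rho_j$ for $t_n\le b_j$, and the $c_0/\rho_j$-neighborhoods of these $t_n$ cover a subinterval of $[0,b_j]\subset[0,1]$ of measure $\gtrsim b_j$, on which $\sup_n|S^1 f_{\la_j,\rho_j}(x,t_n)|\ge 1/2$.

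Plugging this lower bound into the hypothesized~\eqref{weakest2} and using $\|f_{\la_j,\rho_j}\|_{H^s}^2\lss \la_j^{2s}\rho_j^{-1}\sim \rho_j^{2s-1}$ (recall~\eqref{Hsupperbd}) gives $b_j\lss\rho_j^{2s-1}$; the arithmetic
\[ \rho_j^{2s-1}=(c_0 R_j)^{2s-1}\,b_j^{-(2s-1)/(1-2s)}=(c_0 R_j)^{2s-1}\,b_j \]
cancels $b_j$ from both sides and yields $1\lss R_j^{2s-1}$. Since $s<1/2$ the exponent is strictly negative and $R_j\to\infty$ provides the desired contradiction. The step I expect to require genuine care, and which I flag as the main obstacle, is the measure estimate for the covered set: beyond the interior spacing bound one must verify that the gap $t_{n_*-1}-t_{n_*}$ crossing the threshold $b_j$ is itself at most $2c_0/\rho_j$ (so that the largest $t_n\le b_j$, call it $t_{n_*}$, satisfies $t_{n_*}\ge b_j/2$ for $j$ large); this comes from combining the average-gap bound inside $(b_j,2b_j]$ with the decreasing-differences hypothesis. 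Beyond this point the argument is markedly cleaner than the $a\ne 1$ case, since the absence of a dispersive $\xi^2$ contribution in the phase makes the choice $\la_j\sim\rho_j$ legitimate.
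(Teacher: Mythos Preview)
Your argument is correct and is essentially the paper's own proof: both place the Fourier support on a short interval in the negative half-line so that $S^1$ acts as a pure translation, choose the single frequency scale $\la_j\sim\rho_j\sim R_j\,b_j^{-1/(1-2s)}$, and arrive at the contradiction $1\lesssim R_j^{2s-1}$. The boundary-gap issue you flag (that $t_{n_*}\ge b_j/2$ for large $j$) is genuine and resolves exactly as you indicate via the average-gap bound on $(b_j,2b_j]$ combined with monotonicity of the differences; the paper's proof in fact leaves this step implicit.
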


\begin{proof}
Assume that $\{t_n\}\notin \ell^{\rho(s),\infty}$.
Arguing as in the proof of Proposition \ref{weakestprop} we find an increasing sequence $M_j$ with $\lim M_j=\infty$ and  
a sequence of positive numbers $b_j$ with $\lim_{j\to\infty} b_j= 0$  so that
\[\label{counterposa=1}
  \#(\{n: b_j<t_n\le 2b_j\} ) \ge M_j b_j^{-\rho(s)}.
 \]
As in the previous proof we also have
 \Be\label{uppertndiffa=1}
 t_n-t_{n+1} \le 2M_j^{-1} b_j^{\rho(s)+1}, \; \text{ if } ~t_n\le b_j.
 \Ee
 
 Let $g\in C^\infty$ be  nonnegative, supported in $(-1/2,1/2)$, such that $\int g(\xi)\,d\xi=1$.
 Define 
 $f_{\la}$, for large $\la$, by $$\widehat f_\la(\xi) = 
  {10}{\la^{-1}} g(10\la^{-1}(\xi+\la)).$$
 Then $\|f_\la\|_{H^s} \le C_\circ \la^{s-1/2}$. We write 
  \begin{align*}
|S^1f_{\la}(x,t)|= \Big|\int e^{i 10^{-1}\la(x-t)\xi} g(\xi)\tfrac{d\xi}{2\pi}\Big|
\ge 1 -
\Big|\int (e^{i 10^{-1} \la(x-t)\xi}-1) g(\xi)\,\tfrac{d\xi}{2\pi}\Big|
\end{align*}
and   see that 
\Be \label{ptwlowerbda=1}
|S^1f_\la(x,t)|\ge 1/2, \quad \text{  if $|x-t|\le \la^{-1}$.}
\Ee
We now set $\la_j=M_j b_j^{-\frac{1}{1-2s}}$. Note that $\rho(s)+1=(1-2s)^{-1}$, and thus we have $t_n-t_{n+1}\le M_j \la_j^{-1}$ for $t_n\le b_j$, by \eqref{uppertndiffa=1}. Hence by  \eqref{ptwlowerbda=1}
we see that 
$$\sup_n |S^1f_{\la_j} (x) |\ge 1/2, \quad \text{ for $0<x<b_j/2$.}$$
Therefore the asserted weak type inequality implies 
$$b_j/2 \le 4 \|f_{\la_j}\|_{H^s}^2 \le 4 C_\circ \la_j^{2s-1}= 4C_\circ M_j^{2s-1} b_j $$
and thus $8C_\circ M_j^{2s-1}$ is bounded below as $j\to \infty$. This  yields a contradiction as we have $\lim_{j\to 0} M_j^{2s-1}=0$ for $s<1/2$.
\end{proof}

\begin{proof}[Proof of  Theorem \ref{aeconva=1}]
The implication (a)$\implies$(b) follows from Proposition \ref{endpta=1}. The implication (b)$\implies$(c) follows from Tshebyshev's inequality. The implication (c)$\implies$(a) follows from Proposition \ref{weakestpropa=1}. The implication (c)$\implies$(d)  follows by a standard argument using the weak type inequality and the density of Schwartz functions. The implication (d)$\implies$(c) follows from  Proposition \ref{aeconvnec}.
\end{proof}

\section{Appendix: Proof of Proposition \ref{aeconvnec}}\label{nonaeconv}

 We need to use a  theorem by Nikishin, whose proof can be found, for example, in \cite{RdF-GC} (Chapter VI, Corollary 2.7), see also \cite{steinsequences}.
Nikishin's theorem asserts that if $M:L^2(Y,\mu)\to L^0(\R^d,|\cdot|)$ is a continuous sublinear operator (with $(Y,\mu)$ an arbitrary measure space), then there exists a measurable function $w$ with  $w(x)>0$ a.e. such that
$$\int_{\{x:\,|Mf(x)|>\alpha\}}w(x)\,dx\le\alpha^{-2} \|f\|_{L^2(\mu)}^2.$$


\medskip

To prove Proposition \ref{aeconvnec}, let $M^af(x)=\sup_n|S^af(x,t_n)|$ and consider 
$T_n^ag(x)=(2\pi)^{-1}\int e^{i(x\xi+t_n|\xi|^a)}g(\xi)\,d\xi$, so that 
$T_n^a \widehat f(x)=S^af(x,t_n)$. Then $T_n^a$ acts on functions in the weighted $L^2$ space $L^2(\mu_s)$, where $d\mu_s(\xi)=(1+|\xi|^2)^{s}d\xi$. Define  the corresponding maximal operator, $\wt M^ag=\sup_n|T_n^ag|$. 

Now assuming that $\lim_n S^af(x,t_n)$ exists a.e. for every $f\in H^s$, we see that $\wt M^ag(x)<\infty$ a.e. for every $g\in L^2(\mu_s)$. Then by Proposition 1.4, p. 529 in \cite{RdF-GC}, this implies that the sublinear operator $\wt M^a: L^2(\mu_s)\to L^0(|\cdot|)$ is continuous. By the abovementioned  Nikishin's theorem,
$$\int_{\{x:\,|\wt M^ag(x)|>\alpha\}}w(x)\,dx\le\alpha^{-2} \|g\|_{L^2(\mu_s)}^2$$
for some weight $w$ with $w(x)>0$ a.e. As we can replace $w$ with $\min\{w,1\}$ we may further assume that $w$ is bounded.

Next, for $f\in H^s$,  $\wt M^a\widehat f=M^a f$ and $\|\widehat f\|_{L^2(\mu_s)}=\|f\|_{H^s}$, so
$$\int_{\{x:\,|M^af(x)|>\alpha\}}w(x)\,dx\le\alpha^{-2} \|f\|_{H^s}^2.$$ After the change of variables $x\to x+y$ and using the translation invariance of $M^a$, we can replace the integrand $w(x)$ by $w(x-y)$ for any $y$. Arguing as in Chapter VI of \cite{RdF-GC} 
multiply both sides of the resulting inequality by $h(y)$, where $h$ is a strictly positive continuous function with $\int h=1$, and then integrate in $y$, to arrive at
$$\int_{\{x:\,|M^af(x)|>\alpha\}}h*w(x)\,dx\le\alpha^{-2} \|f\|_{H^s}^2.$$
Since 
$h*w$ is  continuous  it attains a minimum over any compact set. We therefore conclude that $$\meas\big(\{x\in K: |M^af(x)|>\alpha\}\big)\le C_K \alpha^{-2} \|f\|_{H^s}^2$$ must hold true for every compact set $K$, as desired. \qed


{}

\end{document}